\renewcommand{\hat}{\widehat}
\newcommand{\R}{\mathbf{R}}
\newcommand{\C}{\mathbf{C}}
\newcommand{\Q}{\mathbf{Q}}
\newcommand{\K}{\mathbf{K}}
\newcommand{\m}{\mathfrak{m}}
\renewcommand{\div}{\operatorname{div}}
\DeclareMathOperator{\Pic}{Pic}
\DeclareMathOperator{\Div}{Div}
\DeclareMathOperator{\Gal}{Gal}
\DeclareMathOperator{\ord}{ord}
\newcommand{\OO}{\mathcal O}
\DeclareMathOperator{\Supp}{Supp}
\DeclareMathOperator{\supp}{Supp}
\renewcommand{\mod}{\text{mod}}
\DeclareMathOperator{\spec}{Spec}
\DeclareMathOperator{\an}{an}
\DeclareMathOperator{\vol}{vol}
\newtheorem{thm}{Theorem}[section]
\newtheorem{thm*}{Theorem}
\newtheorem{bigthm}{Theorem}
\newtheorem{prop}[thm]{Proposition}
\newtheorem{cor}[thm]{Corollary}
\newtheorem{lemme}[thm]{Lemma}
\theoremstyle{definition}
\newtheorem{dfn}[thm]{Definition}
\newtheorem{ex}[thm]{Example}
  \setlist[enumerate]{label={(\roman*)}}
  \setlist[enumerate,1]{label={(\arabic*)}}
  \setlist[enumerate,1]{label={(\arabic*)}}
  \setlist[enumerate,1]{label={(\arabic*)}}
\begin{document}
\author{Marc Abboud \\ Université de Neuchâtel}
\title{A local version of the arithmetic Hodge index theorem over quasiprojective varieties}
\address{Marc Abboud, Institut de mathématiques, Université de Neuchâtel
\\ Rue Emile-Argand 11 CH-2000 Neuchâtel}
\email{marc.abboud@normalesup.org}
\subjclass[2020]{14G40 32P05 32W20}
\keywords{adelic divisors, adelic line bundles, arithmetic Hodge index}
\thanks{The author acknowledge support by the Swiss National Science Foundation Grant “Birational transformations of higher dimensional varieties” 200020-214999.}

\begin{abstract}{We define a local intersection number for metrised line bundles over quasiprojective varieties with
    compact support and show the local arithmetic Hodge index theorem for this intersection number.  As a consequence we
  obtain a uniqueness result for the Monge-Ampère equation over quasiprojective varieties within a certain class of
solutions both in the archimedean and non-archimedean setting.}
\end{abstract}
\maketitle

\section{Introduction}\label{sec:intro}
\subsection{Local intersection number for metrised line bundles over quasiprojective
varieties}\label{subsec:strongly-compactly-supported-intro}
Let $\K_v$ be a complete field and let $U$ be a normal quasiprojective variety over $\K_v$. In
\cite{yuanAdelicLineBundles2023} Yuan and Zhang have defined metrised line bundles and arithmetic divisors over
$U$ as limits of such objects on projective compactification of $U$ with certain properties.

If $X$ is a projective variety over $\K_v$ and $\overline M, \overline L_1, \dots, \overline L_n$ are integrable
metrised line bundles with $M$ the trivial line bundle (we say that $\overline M$ is \emph{vertical}), then the local
intersection number
\begin{equation}
\overline M \cdot \overline L_1 \cdots \overline L_n
  \label{<+label+>}
\end{equation}
is well defined. Furthermore we have an integration by parts formula: if $s$ is a rational section of $L_n$, then 
\begin{equation}
  \overline M \cdot \overline L_1 \cdots \overline L_{n} = (\overline M \cdot \overline L_1 \cdots \overline
  L_{n-1})_{|\div (s)} - \int_{X^{\an}} \log \| s \|_{\overline L_n} c_1 (\overline M) \cdot c_1 (\overline L_1) \cdots
  c_1 (\overline L_{n-1}).
  \label{<+label+>}
\end{equation}
In the quasiprojective setting, there is no yet definition of a local intersection number. If $\overline M$ is vertical
and $\overline L_i$ are integrable line bundles then one would like to define the following intersection number 
\begin{equation}
  \overline M \cdot \overline L_1 \cdots \overline L_n := \int_{U^{\an}} g_{\overline M} c_1 (\overline L_1) \cdots c_1
  (\overline L_n).
  \label{eq:def-intersection-number}
\end{equation}
But this poses many problems. First, it is not clear that $g_{\overline M}$ is integrable with respect to $c_1
(\overline L_1) \cdots c_1 (\overline L_n)$, nor that this is compatible with the limit process in the definition of
Yuan and Zhang.  However, if $g_{\overline M}$ is constant outside a compact subset then
\eqref{eq:def-intersection-number} makes sense. We call such metrised line bundles \emph{compactly supported vertical
line bundels}. We also show that the integration by parts formula holds. 

\begin{bigthm}\label{bigthm:intersection-number}
  Let $U$ be a normal quasiprojective variety over a complete field $\K_v$. Let $\hat \Pic (U)_{int}$ be the set of
  integrable metrised line bundles over $U$ and let $\hat \Pic_c (U)$ be the set of integrable compactly supported
  vertical metrised line bundles. There is a well defined local intersection number 
  \begin{equation}
    \hat \Pic_c (U) \times \hat \Pic(U)_{int} \rightarrow \R
    \label{<+label+>}
  \end{equation}
  defined by \eqref{eq:def-intersection-number}. It is linear in the first variable and multilinear and symmetric in the
  last $n$ variables. Furthermore, there is an integration by parts formula: if $s$ is a rational
  section of ${L_n}_{|U}$, then 
  \begin{equation}
    \overline M \cdot \overline L_1 \cdots \overline L_{n} = \left( \overline M \cdot \overline L_1 \cdots \overline
    L_{n-1} \right)_{|\div (s)} - \int_{U^{\an}} \log \|s \|_{\overline L_n} c_1 (\overline M) \cdot c_1 (\overline L_1)
    \cdots c_1 (\overline L_{n-1}).
    \label{<+label+>}
  \end{equation}
\end{bigthm}
Furthermore we prove an arithmetic Hodge index theorem for this class of vertical line bundles.

The proof of this theorem relies on the following density results. If $\Omega ' \Subset U^{\an}$ is a compact
subset, then there exists a compact subset $\Omega \Subset U^{\an}$ containing $\Omega '$ such that the set of
model functions over $U^{\an}$ which are constant outside $\Omega$ is dense in the set of continuous functions that are
constant outside $\Omega$. To try to define an intersection number for a larger class of vertical line bundles would
require a deeper study of integrability of Green functions over quasiprojective varieties which is a difficult problem
even over $\C$. 

\subsection{An arithmetic Hodge index theorem over quasiprojective varieties}\label{subsec:arithm-hodge-index-intro}

The arithmetic Hodge index theorem proven in \cite{yuanArithmeticHodgeIndex2017} states the following. Let $X$ be
a projective variety of dimension $n$ over a complete field, let $\overline M$ be a vertical integrable line bundle and
let $\overline L_1, \dots, \overline L_{n-1}$ be semipositive line bundles with $L_i$ big and nef, then
\begin{equation}
  \overline M^2 \cdot \overline L_1 \cdots \overline L_{n-1} \leq 0.
  \label{<+label+>}
\end{equation}
Furthermore if $\overline M$ is $\overline L_i$-bounded for every $i = 1, \dots, n-1$, then we have equality if and only
if the metric of $\overline M$ is constant. Using our newly defined intersection number, we show that this result holds
in the quasiprojective setting if $\overline M$ is integrable and compactly supported. 

\begin{bigthm}\label{bigthm:arithmetic-hodge-index}
  Let $U$ be a quasiprojective variety over a complete field $\K_v$. Let $\overline M$ be a compactly
  supported vertical line bundle and let $\overline L_1, \dots, \overline L_{n-1}$ be nef metrised line bundles
  over $U$, then
  \begin{equation}
    \overline M^2 \cdot \overline L_1 \cdots \overline L_{n-1} \leq 0.
    \label{<+label+>}
  \end{equation}
  Furthermore, if $\overline M$ is $\overline L_i$-bounded for every $i$ and $L_i^n > 0$ then we have equality if and
  only if the metric of $\overline M$ is constant over $U^{\an}$.
\end{bigthm}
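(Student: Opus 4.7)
The plan is to reduce to the projective arithmetic Hodge index theorem of \cite{yuanArithmeticHodgeIndex2017} via the approximation built into the definition of strongly compactly supported bundles; the strong compact support hypothesis is the key ingredient that legitimises passing to the limit of local intersection numbers.

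First I would unpack the definitions. By hypothesis we can write $\overline M = \lim_k \overline M_k$ with $\overline M_k = \overline A_k - \overline B_k$, each $\overline A_k, \overline B_k$ strongly nef, each $\overline M_k$ vertical, and each Green function $g_{\overline M_k}$ constant outside a fixed compact $\Omega \subset U^{\an}$. After refining models I may assume that on a common projective compactification $X_k \supset U$ the bundles $\overline L_i$ are also approximated by strongly nef $\overline L_{i,k}$, and by adding a small multiple of an ample class I may assume $L_{i,k}$ is big and nef on $X_k$. The projective arithmetic Hodge index theorem on each $X_k$ then yields
\begin{equation*}
\overline M_k^2 \cdot \overline L_{1,k} \cdots \overline L_{n-1,k} \leq 0.
\end{equation*}

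The main obstacle is taking $k \to \infty$. In general, local intersection numbers on a quasiprojective variety are integrals of Green functions against products of curvature measures, and weak convergence of the measures does not suffice against non compactly supported Green functions. This is exactly where strong compact support intervenes. Integrating by parts (Bedford--Taylor in the archimedean setting, Chambert-Loir in the non-archimedean one) rewrites $\overline M_k^2 \cdot \overline L_{1,k} \cdots \overline L_{n-1,k}$ as an integral of $g_{\overline M_k}$ against $c_1(\overline M_k) \prod_i c_1(\overline L_{i,k})$, which is supported in $\Omega$ because $g_{\overline M_k}$ is locally constant off $\Omega$ and $\overline M_k$ is vertical (so constants can be absorbed). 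On $\Omega$, uniform convergence of $g_{\overline M_k}$ (up to additive constants, which contribute nothing by verticality) combined with weak convergence of the curvature products gives
\begin{equation*}
\lim_k \overline M_k^2 \cdot \overline L_{1,k} \cdots \overline L_{n-1,k} = \overline M^2 \cdot \overline L_1 \cdots \overline L_{n-1},
\end{equation*}
so the inequality passes to the limit. Establishing this convergence rigorously, and checking that the constants $c_k$ subtracted from $g_{\overline M_k}$ really do not affect the intersection (via the vanishing of the total mass of a vertical current), is the main technical step.

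For the equality case, suppose $\overline M^2 \cdot \overline L_1 \cdots \overline L_{n-1} = 0$. The bilinear form $(\overline N_1, \overline N_2) \mapsto -\overline N_1 \cdot \overline N_2 \cdot \overline L_1 \cdots \overline L_{n-1}$ is positive semidefinite on $\overline L_i$-bounded strongly compactly supported vertical classes, by the inequality already proven; so a Cauchy--Schwarz argument forces $\overline M \cdot \overline N \cdot \overline L_1 \cdots \overline L_{n-1} = 0$ for every such $\overline N$. Testing this against bundles $\overline N$ obtained by truncation and smoothing of $g_{\overline M}$ inside $\Omega$, and using that $L_i^n > 0$ together with an induction on $n$ modelled on the projective equality case of \cite{yuanArithmeticHodgeIndex2017}, recovers the fact that $g_{\overline M}$ must be constant on $U^{\an}$. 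The most delicate point, besides the convergence above, is to ensure that these test bundles $\overline N$ remain $\overline L_i$-bounded with uniform constants, so that the Cauchy--Schwarz inequality can be iterated and the projective rigidity on each $X_k$ survives the limit.
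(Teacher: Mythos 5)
The inequality part of your proposal is correct and matches the paper: pass to the limit using the projective arithmetic Hodge index theorem, with Theorem~\ref{thm:local-intersection-number} (convergence of local intersection numbers for strongly compactly supported classes) supplying exactly the convergence you need. Your identification of strong compact support as the crucial ingredient legitimising the limit is precisely right.

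The equality case is where your proposal is too vague and has one genuine misconception. First, the Cauchy--Schwarz inequality \eqref{eq:cauchy-schwarz-inequality} does \emph{not} require the test bundle $\overline N$ to be $\overline L_i$-bounded: it holds for any strongly compactly supported vertical $\overline N$, since $\overline M + \lambda\overline N$ is still in that class. Only $\overline M$ itself needs $\overline L_i$-boundedness, and only in one place — Lemma~\ref{lemme:vanishing-of-measure}, where one replaces $\overline L_{n-1}$ by the nef bundle $\overline L_{n-1}\pm\epsilon\overline M$ to run a second Cauchy--Schwarz. So the ``most delicate point'' you flag about preserving $\overline L_i$-boundedness of test bundles is a red herring. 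Second, ``testing against truncations and smoothings of $g_{\overline M}$'' is not what makes the argument close. The actual chain of steps in the paper is: (a) show the \emph{measure} $c_1(\overline M)^2\,c_1(\overline L_1)\cdots c_1(\overline L_{n-2})$ vanishes, via Cauchy--Schwarz against model vertical line bundles with compactly supported Green functions and the density result Theorem~\ref{thm:density-model-functions-with-compact-support}; (b) descend to codimension-one subvarieties $Y$ using the integration-by-parts formula Theorem~\ref{thm:integration-by-part}, which in turn needs the volume estimate Lemma~\ref{lemme:diviseur-plus-grand-que-ample} to produce a section of some $n_kL_{n-1,k}$ vanishing to a definite order along $Y$; (c) check that the geometric positivity survives the restriction, i.e.\ that ${L_i}_{|Y}^{n-1}>0$ for all but finitely many $Y$ — this is genuinely non-trivial in the quasiprojective setting because the geometric intersection numbers are only limits and a priori the positivity could degenerate; and (d) close the induction with the curve case, where one uses flat metrics on $\OO_X(p-q)$ to pin down $g_{\overline M}(p)=g_{\overline M}(q)$. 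Steps (b), (c), (d) are the quasiprojective-specific technical content of the proof, and none of them appear in your outline.
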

The positivity assumption for the $L_i$'s is necessary for the equality statement. Indeed, we can for example write the
line bundle $\overline 0 = \lim_k \frac{1}{k} \overline A$ where $\overline A$ is any semipositive line bundle. As in
\cite{yuanArithmeticHodgeIndex2017} we have the following corollary which is a variant of the theorem of Calabi over
Kahler varieties.

\begin{bigthm}\label{bigthm:calabi}
  Let $\K_v$ be a complete closed field and $U$ be a quasiprojective variety over $\K_v$ of dimension $n$. Let $\overline
  L_1, \overline L_2$ be two nef metrised line bundles such that $c_1 (\overline L_1)^n = c_1 (\overline L_2)^n$ and $\overline
  L_1 - \overline L_2$ is vertical and compactly supported (in particular $L_1,L_2$ have the same underlying line bundle
$L$). If $L^n > 0$, then the metric of $\overline L_1 - \overline L_2$ is constant.
\end{bigthm}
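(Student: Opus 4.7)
The plan is to reduce to the equality case of Theorem~\ref{bigthm:arithmetic-hodge-index} applied to $\overline M := \overline L_1 - \overline L_2$. Because $L_1$ and $L_2$ share the same underlying line bundle, $\overline M$ is vertical, and by hypothesis it is strongly compactly supported. If I can show $\overline M^{2} \cdot \overline L_1^{n-1} = 0$, then since $L_1^n > 0$ and $\overline M$ is $\overline L_1$-bounded (a property I expect to hold for any strongly compactly supported vertical line bundle), the equality statement in Theorem~\ref{bigthm:arithmetic-hodge-index} forces the metric of $\overline M$ to be constant on $U^{\an}$, which is exactly what is claimed.

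To produce this vanishing, I first exploit the hypothesis $c_1(\overline L_1)^n = c_1(\overline L_2)^n$ by integrating the Green function $g_{\overline M}$ against both sides. The crucial point is that for strongly compactly supported vertical $\overline M$, the function $g_{\overline M}$ is bounded and constant outside a compact subset of $U^{\an}$, so (by the defining properties of this class, which are precisely what is set up in the paper to avoid the convergence issues highlighted in the introduction) the integrals $\int_{U^{\an}} g_{\overline M}\, c_1(\overline L_j)^n$ compute the local intersection numbers $\overline M \cdot \overline L_j^{n}$ for $j=1,2$. The measure identity therefore yields $\overline M \cdot \overline L_1^{n} = \overline M \cdot \overline L_2^{n}$.

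Next I combine this with the telescoping identity
\[
  \overline L_1^{n} - \overline L_2^{n} \;=\; \overline M \cdot \sum_{i=0}^{n-1} \overline L_1^{i}\, \overline L_2^{n-1-i},
\]
which is valid at the level of intersection numbers by the multilinearity and symmetry of the pairing. The previous vanishing then rewrites as
\[
  \overline M^{2} \cdot \sum_{i=0}^{n-1} \overline L_1^{i}\, \overline L_2^{n-1-i} \;=\; 0.
\]
By the inequality part of Theorem~\ref{bigthm:arithmetic-hodge-index}, each summand on the left is $\leq 0$ (the tuple $\overline L_1^{i}\, \overline L_2^{n-1-i}$ is a product of nef line bundles), so every summand must in fact vanish. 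Taking $i=n-1$ gives the desired identity $\overline M^{2} \cdot \overline L_1^{n-1} = 0$.

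The main obstacle is not the short algebraic manipulation above but the justification of the integration step in the quasiprojective setting: namely, that $\int g_{\overline M}\, c_1(\overline L_j)^n$ genuinely equals the local intersection number $\overline M \cdot \overline L_j^{n}$ when $\overline M$ is strongly compactly supported and the $\overline L_j$ are only nef and limits of model metrics. This, together with the $\overline L_j$-boundedness of $\overline M$, is exactly the kind of compatibility for which the class of strongly compactly supported vertical line bundles was introduced, and which Theorem~\ref{bigthm:arithmetic-hodge-index} and its proof are designed to provide.
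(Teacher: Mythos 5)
Your reduction to $\overline M^2\cdot\overline L_1^{\,n-1}=0$ is fine, and the telescoping argument plus termwise nonpositivity is the same as in the paper. The gap is in the final step: you apply the equality case of Theorem~\ref{bigthm:arithmetic-hodge-index} with $\overline L_1$ in all $n-1$ slots, which requires that $\overline M$ be $\overline L_1$-bounded, and you assert this as if it were a generic property of strongly compactly supported vertical line bundles. It is not: $\overline L$-boundedness of $\overline M$ depends on the relative scale of the two metrics (rescaling $\overline M$ by a large constant keeps it strongly compactly supported but destroys $\overline L$-boundedness for any fixed $\overline L$), and in the present case $\overline L_1+\epsilon\overline M=(1+\epsilon)\overline L_1-\epsilon\overline L_2$ has no reason to be nef for small $\epsilon>0$ since we have no comparison between the metrics on $\overline L_1$ and $\overline L_2$.

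The paper sidesteps this by replacing $\overline L_1$ with the symmetric choice $\overline L:=\overline L_1+\overline L_2$. Since each summand $\overline M^2\cdot\overline L_1^{\,k}\overline L_2^{\,n-1-k}$ vanishes, so does the whole expansion
\begin{equation}
\overline M^2\cdot(\overline L_1+\overline L_2)^{n-1}=0,
\end{equation}
and for this $\overline L$ the boundedness is immediate: $(\overline L_1+\overline L_2)\pm\epsilon\overline M=(1\pm\epsilon)\overline L_1+(1\mp\epsilon)\overline L_2$ is nef for $0<\epsilon<1$ because it is a nonnegative combination of nef metrised line bundles, and $(L_1+L_2)^n=2^nL^n>0$. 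Replacing $\overline L_1^{\,n-1}$ with $(\overline L_1+\overline L_2)^{n-1}$ in your last step, and checking $(\overline L_1+\overline L_2)$-boundedness as above, closes the gap and recovers the paper's proof.
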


We believe these results could have applications in the study of quasiprojective algebraic
dynamical systems. The author uses Theorem \ref{bigthm:calabi} in \cite{abboudRigidityPeriodicPoints2024} to show the
following result. If $U$ is a normal affine surface over an algebraically closed field $K$ and $f,g$ are automorphisms
with first dynamical degree $>1$, then $f,g$ share a Zariski dense set of periodic points if and only if they have the
same periodic points.

\subsection*{Acknowledgments}\label{subsec:acknowledgements}
Part of this paper was written during my visit at Beijing International Center for Mathematical Research which
I thank for its welcome. I thank Junyi Xie and Xinyi Yuan for our dicussions on adelic divisors and line bundles during
this stay.

\section{Arithmetic divisors and metrised line bundles}\label{sec:arithm-divisors-metrised-line-bundles}
\subsection{Terminology}\label{sec:terminology}
Let $R$ be an integral domain. A variety over $R$ is a flat, normal, integral, separated scheme over $\spec R$ locally of
finite type. If $R = \K$ is a field we require in addition that a variety over $\K$ is geometrically irreducible.
\subsection{Divisors and line bundles}\label{subsec:divisors}
Let $X$ be a normal $R$-variety, a \emph{Weil divisor} over $X$ is a formal sum of irreducible codimension 1 closed
subvarieties of $X$ with integer coefficients. A \emph{Cartier divisor} over $X$ is a global section of the sheaf
$\mathcal K_X^\times / \OO_X^\times$ where $\mathcal K_X$ is the sheaf of rational functions over $X$ and $\OO_X$ the
sheaf of regular functions over $X$. If $R$ is Noetherian, every Cartier divisor induces a Weil divisor. If $X$ is a
projective variety over a field $K$ and  a Weil divisor over $X$, we write $\Gamma (X,D)$ for the
set
\begin{equation}
  \Gamma (X,D) = \left\{ f \in K (X)^\times : \div (f) + D \geq 0 \right\}.
  \label{eq:<+label+>}
\end{equation}
A \emph{$\Q$-Cartier divisor} is a formal sum $D = \sum_i \lambda_i D_i$ where $\lambda_i \in \Q$ and the $D_i$'s are
Cartier divisors.
If $X$ is a projective variety over $R$, we write $\OO_X(D)$ for the line bundle associated to $D$.
If $R = \K$ is a field and $L$ is a line bundle over $X$. We write $H^0 (X,L)$ for the space of global sections of $L$ and
$h^0:= \dim_\K H^0 (X, L)$. The \emph{volume} of $L$ is
\begin{equation}
  \vol (L) = \lim_{m \rightarrow + \infty} \frac{h^0 (X, mL)}{m^n / n!}.
  \label{eq:<+label+>}
\end{equation}
It is also defined for $\Q$-line bundles (see \cite{lazarsfeldPositivityAlgebraicGeometry2004}). A line bundle $L$ is
\emph{big} if $\vol(L) > 0$ and if $L$ is nef, then $\vol (L) = L^{\dim X}$.
We have the following inequality (see e.g Example 2.2.33 of \cite{lazarsfeldPositivityAlgebraicGeometry2004})
\begin{thm}\label{thm:inequality-volume}
  If $X$ is a projective variety of dimension $n$ over a field $K$ and $L, M$ are nef $\Q$-line bundles, then
  \begin{equation}
    \vol (L - M) \geq L^{n} - n \cdot L^{n-1}\cdot M.
    \label{eq:<+label+>}
  \end{equation}
\end{thm}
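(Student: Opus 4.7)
The plan is to estimate $h^0\bigl(X, k(L-M)\bigr)$ from below as $k\to\infty$ and then divide by $k^n/n!$, using a filtration of $\OO_X(kL)$ along the divisors $jD$ for $j = 0, \ldots, k-1$, where $D$ is a general member of $|M|$. To carry this out cleanly I would first reduce to the case where $L, M$ are integral Cartier divisors with $M$ very ample: using homogeneity of $\vol$ in degree $n$, scale $(L,M)$ to make both integral; using continuity of $\vol$ and of the intersection pairing on $N^1(X)_\R$, replace $M$ by $M + \varepsilon A$ for a fixed ample class $A$, prove the inequality for the resulting ample perturbation, and let $\varepsilon\to 0$. After further scaling one may assume $M$ is very ample, so that Bertini furnishes a general $D \in |M|$ that is reduced and irreducible of dimension $n-1$, with $L|_D$ still nef.

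Iterating, for $j = 0, \ldots, k-1$, the short exact sequences
\[
0 \to \OO_X\bigl(kL - (j+1)D\bigr) \to \OO_X(kL - jD) \to \OO_D\bigl((kL-jD)|_D\bigr) \to 0,
\]
and using $D \sim M$ so that $\OO_X(kL - kD) \cong \OO_X\bigl(k(L-M)\bigr)$, yields
\[
h^0\bigl(k(L-M)\bigr) \;\geq\; h^0(kL) \;-\; \sum_{j=0}^{k-1} h^0\bigl(D,(kL-jD)|_D\bigr).
\]
By asymptotic Riemann--Roch for nef bundles, $h^0(kL) = \tfrac{k^n}{n!}L^n + o(k^n)$. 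Each correction term is bounded by $h^0(D, kL|_D)$ via the injection given by a global section of the very ample line bundle $\OO_D(jM|_D)$; in turn $h^0(D, kL|_D) \le \tfrac{k^{n-1}}{(n-1)!} L^{n-1}\cdot M + o(k^{n-1})$ by asymptotic Riemann--Roch on the $(n-1)$-dimensional variety $D$, using nefness of $L|_D$ and the projection formula $(L|_D)^{n-1} = L^{n-1}\cdot M$. Summing the $k$ correction terms gives a total bound $\tfrac{k^n}{(n-1)!} L^{n-1}\cdot M + o(k^n)$, and combining with the lower bound on $h^0(kL)$ and dividing by $k^n/n!$ produces the desired inequality.

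The main obstacle I expect is maintaining control on the auxiliary subvariety $D$: the asymptotic estimate $h^0(D, k L|_D) \le \tfrac{k^{n-1}}{(n-1)!}(L|_D)^{n-1} + o(k^{n-1})$ relies on the identity $\vol_D(L|_D) = (L|_D)^{n-1}$ for the nef restriction $L|_D$, which needs $D$ to be irreducible and reduced of the correct dimension. This is precisely why the reduction to a very ample $M$ together with Bertini is worth the upfront work. A secondary subtlety is that the injection bounding the correction terms requires $jM|_D$ to be effective, which is automatic because $M|_D$ is itself very ample.
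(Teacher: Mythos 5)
The paper does not prove this inequality; it simply cites Example 2.2.33 of Lazarsfeld's \emph{Positivity in Algebraic Geometry}, and the argument outlined there is exactly the one you give: reduce to $M$ very ample, filter $\OO_X(kL)$ along the multiples of a general $D \in |M|$, and compare the resulting telescoping sum against asymptotic Riemann--Roch for the nef restriction $L|_D$. Your proof is correct and the error bookkeeping (each of the $k$ correction terms is bounded by the same $h^0(D, kL|_D)$, so the total error is $k \cdot o(k^{n-1}) = o(k^n)$) is handled properly. One small caveat worth recording: Bertini irreducibility of a general $D \in |M|$ requires the base field to be infinite and $n \geq 2$; since volume and intersection numbers are invariant under base field extension you can pass to $\overline{K}$ first, and for $n = 1$ the divisor $D$ is a finite scheme and the bound is trivial. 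Alternatively, one can avoid Bertini altogether by observing that the estimate $h^0\bigl(D, kL|_D\bigr) \leq \tfrac{k^{n-1}}{(n-1)!}\,L^{n-1}\cdot M + O(k^{n-2})$ holds for the nef $L|_D$ on any effective Cartier divisor $D \in |M|$ by asymptotic Riemann--Roch on the projective scheme $D$, with no need for $D$ to be irreducible or reduced.
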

\subsection{Berkovich spaces}\label{subsec:berkovich-spaces} For a general reference on Berkovich spaces, we refer to
\cite{berkovichSpectralTheoryAnalytic2012}.
Let $\K_v$ be a complete field with respect to an absolute value $| \cdot |_v$. If $X$ is a quasiprojective
variety over $\K_v$, we write $X^{\an}$ for the Berkovich analytification of $X$ with respect to $\K_v$. It is a locally ringed space with a contraction map
\begin{equation}
  c : X^{\an} \rightarrow X.
  \label{<+label+>}
\end{equation}
 It is a
Hausdorff space. In
particular, if $X$ is proper (e.g projective), then $X^{\an}$ is compact.

Let $\overline \K_v$ be an algebraic closure of $\K_v$. The absolute value $\left| \cdot \right|_v$ extends naturally to
$\overline \K_v$. If $p \in X(\overline \K_v)$ is a rational point, then it defines a point in
$X^{\an}$. We thus have a map
\begin{equation}
  \iota_0: X (\overline \K_v) \rightarrow X^{\an}
  \label{<+label+>}
\end{equation}
and we write $X (\overline \K_v)$ for its image. It is a dense subset of $X^{\an}$. This map is generally
not injective as two points $p,q \in X(\overline \K_v)$ define the same seminorm if and only if they are in the
same orbit for the action of the Galois group $\Gal (\overline \K_v / \K_v)$.

If $\phi : X \rightarrow Y$ is a morphism of varieties, then there exists a unique morphism
\begin{equation}
  \phi^{\an} :
  X^{\an} \rightarrow Y^{\an}
\end{equation}
such that the diagram
\begin{equation}
\begin{tikzcd}
  X^{\an} \ar[r, "\phi^{\an}"] \ar[d] & Y^{\an} \ar[d] \\
  X \ar[r, "\phi"] & Y
  \label{<+label+>}
\end{tikzcd}
\end{equation}
commutes. In particular, if $X \subset Y$, then $X^{\an}$ is isomorphic to $c_Y^{-1}(X)
\subset Y^{\an}$.

If $\K_v$ is not algebraically closed, let $\C_v$ be the completion of the algebraic closure of $\K_v$ with respect to
$v$. If $X$ is a variety over $\K_v$ and $X_{\C_v}$ is the base change to $\C_v$, then we have the following
relation for the Berkovich space
\begin{equation}
  X^{\an} = X_{\C_v}^{\an} / \Gal(\overline \K_v / \K_v)
  \label{eq:<+label+>}
\end{equation}
and the continuous map $X_{\C_v}^{\an} \rightarrow X^{\an}$ is proper (the preimage of a compact subset is a
compact subset) if $X$ is quasiprojective.
In particular, if $\K_v = \R$ and $X_\R$ is a variety over $\R$, then
\begin{equation}
  X_\R^{\an} = X_\R (\C) / (z \mapsto \overline z).
  \label{eq:<+label+>}
\end{equation}

\subsection{Arithmetic divisors and metrised line bundles}
Let $\K_v$ be a complete with respect to an absolute value $|\cdot|_v$. If $v$ is
archimedean, then $\K_v = \C$ or $\R$ and we set $\OO_v = \K_v$. Otherwise let $\OO_v$ be the
valuation ring of $\K_v$ and let $\kappa_v = \OO_v / \m_v$ be the residual field of $\OO_v$ where $\m_v$ is
the maximal ideal of $\OO_v$ of elements of absolute value $<1$. If $v$ is non archimedean, then $\OO_v$ is
Noetherian if and only if $v$ is discrete.

Let $X$ be a projective variety over $\K_v$. If $D = \sum a_i D_i$ is a $\Q$-Cartier divisor over $X$, a
\emph{Green} function of $D$ is a continuous function $g : X^{\an} \setminus
(\Supp D)^{\an} \rightarrow \R$ such that for every point $p \in (\Supp D)^{\an}$, if $z_i$ is a local
equation of $D_i$ at $p$ then the function
\begin{equation}
  g + \sum_i a_i \log |z_i|
  \label{<+label+>}
\end{equation}
extends locally to a continuous function at $p$. Notice that compared to
\cite{moriwakiAdelicDivisorsArithmetic2016} or \cite{yuanAdelicLineBundles2023}, our definition of Green
functions differs by a factor 2.
An \emph{arithmetic} divisor over $X$ a pair $\overline D = (D, g)$ where $D$ is a $\Q$-Cartier divisor and $g$ is a
Green function of $D$.
An arithmetic divisor is called \emph{principal} if some of its multiple is of the form
\begin{equation}
  \label{eq:39}
  \hat \div (P) := (\div(P), -\log |P|)
\end{equation}
where $P$ is a rational function over $X$.
We write $\hat \Div(X)$ for the space of arithmetic divisor and
$\hat \Pr (X )$ for the set of principal arithmetic divisors. If $\K_v = \R$, then a Green function
of $D$ over $X_\R^{\an}$ is the same as the data of a Green function of $D_\C$ over $X_\R (\C)$ invariant by complex conjugation.

We say that an arithmetic divisor $\overline D = (D,g)$ is \emph{effective}, written $\overline D \geq 0$ if
$g \geq 0$, this implies in particular that $D$ is an effective divisor. We write
$\overline D \geq \overline D'$ for $\overline D - \overline D' \geq 0$.

A \emph{metrised} line bundle $\overline L$ over $X$ is a pair given by a $\Q$-line bundle $L$ over $X$, a positive
integer $m$ such that $mL$ is a line bundle and a
family of metrics $|\cdot|_x$ of the stalk of $m \cdot L^{\an}$ at $x$ for $x \in X^{\an}$ such that for every
open subset $U \subset X$ and $s \in H^0 (U, mL)$, the function
\begin{equation}
  \label{eq:40}
  x \in U^{\an} \mapsto |s(x)|_x
\end{equation}
is continuous. An isometry of metrized line bundle is a map $\phi : \overline mL \rightarrow \overline m' L'$
where $\phi : mL \rightarrow m' L'$ is an isomorphism of line bundles and such that
$| \cdot|_{\overline{m' L '}} = |\cdot|_{\overline{mL}} \circ \phi^*$. We write $\hat \Pic(X )$ for the set of
  metrized line bundles over $X$ modulo isometries.

  If $\overline M$ is a metrized line bundle with a trivial underlying line bundle, then we define the \emph{Green}
  function of $\overline M$ as $g_{\overline M} := - \log |1| : X^{\an} \rightarrow \R$.

If $L = \OO_{X} (D)$ where $D$ is a $\Q$-Cartier divisor over $X$ and $\overline D = (D,g)$ is an
arithmetic divisor, then we can define the metrized line bundle $\overline L =: \OO_{X} (\overline D)$ by the
following procedure. If $s_D$ is the trivialising section of $\OO_{X} (D)$ over $X \setminus \Supp D$ such
that $\div(s_D) = D$, then we set
\begin{equation}
  \label{eq:41}
  \forall x \not \in (\Supp D)^{\an}, \quad |s_D(x)| = e^{-g(x)}.
\end{equation}
This defines $\overline L$ up to isometry and we have an isomorphism of groups
\begin{equation}
  \label{eq:42}
  \overline D \in \hat \Div (X ) / \hat \Pr (X ) \mapsto \OO_{X}(\overline
  D) \in \hat \Pic(X ).
\end{equation}
The inverse homomorphism is given by for any $\overline L \in \hat \Pic(X )$ and any
rational section $s$ of $mL$ where $mL$ is a line bundle we associate the arithmetic divisor
\begin{equation}
  \label{eq:43}
  \frac{1}{m}\hat \div(s) := (\frac{1}{m}\div(s), \quad x \mapsto -\frac{1}{m} \log |s(x)|_x).
\end{equation}

\subsection{Model Green functions} Suppose that $v$ is non-archimedean. If $D$ is a divisor on
$X$, a \emph{model} of $(X,D)$ is the data of $(\sX, \sD)$ where $\sX$ is a model of
$X$ over $\OO_v$ and $\sD$  is a divisor on $\sX$ such that ${\sD}_{|X} = D$.
If $(\sX, \sD)$ is a model of $(X,D)$, then $(\sX, \sD)$ induces a Green function of
$D$ over $X^{\an}$ as follows. We have the reduction map (see \cite{berkovichSpectralTheoryAnalytic2012})
\begin{equation}
  r_{\sX} : X^{\an}\rightarrow \sX_{\kappa_v} = \sX \times_{\spec
  \OO_v} \spec \kappa_v.
  \label{<+label+>}
\end{equation}
Let $p \in X^{\an} \setminus (\Supp D)^{\an}$, and let $z_i$ be a local equation of $\sD_{i}$ at
$r_{\sX} (p)$ where we have written $\sD = \sum_i a_i \sD_i$ and the $\sD_i$'s are Cartier divisors, we define
\begin{equation}
  g_{(\sX, \sD)} (p) := - \sum_i a_i \log |z_i|.
  \label{<+label+>}
\end{equation}
It does not depend on the choice of the local equations $z_i$. We call such divisors \emph{model arithmetic divisors}.
The reduction map $r_{\sX}$ is always surjective and for every irreducible component $\mathsf X$ of the special
fiber there is a unique $x \in X^{\an}$ such that $r_{\sX}(x)$ is the generic point of $\mathsf X$, these points are
called \emph{Shilov points} and they are dense in $X^{\an}$. 

If $L$ is a $\Q$-line bundle over $X$, a \emph{model} of $(X,L)$ is the data of $(\sX, \sL)$ where
$\sX$ is a projective model of $X$ over $\OO_v$ and $\sL$ is a $\Q$-line bundle over
$\sX$ such that $\sL_{|X} = L$.
A model $(\sX, \sL)$ defines a metrized line bundle over $X$ as follows. Let $m$ be an integer such that $mL$ and
$m \sL$ are line bundles. Let $x \in X^{\an}$, the line
bundle $m\sL$ is locally free of rank 1 at $r_{\sX} (x)$, let $s$ be a generator of the stalk at $r_{\sX}(x)$,
then $mL^{\an}$ is locally generated at $x$ by $r_{\sX}^* (s)$ and we set $|r_{\sX}^* (s) (x)|_x = 1$.
This does not depend on the choice of the local generator $s$. We call such line bundles \emph{model metrised line
bundles}.

A \emph{model} function over $X^{\an}$ is the Green function of a vertical $\Q$-line bundle over
$X$ or equivalently the Green function of a vertical model $\Q$-divisor.

\begin{ex}\label{ex:constant-green-function}
  In particular, every principal arithmetic divisor is a model arithmetic divisor and if $\lambda \in \K_v$, then
  $\hat \div (\lambda)$ is a vertical model arithmetic divisor with constant Green function equal to $\log \left| \lambda
  \right|$.
\end{ex}

\begin{prop}[\cite{gublerLocalHeightsSubvarieties1998} Lemma 7.8]\label{prop:model-functions-stable-by-sum-and-max}
  The set of model functions is a $\Q$-vector space stable by maximum, i.e if $f,g$ are model functions over
  $X^{\an}$, then $\max(f,g)$ also is.
\end{prop}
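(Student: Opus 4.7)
The plan is to establish the $\Q$-vector space property first and then derive stability under maximum.

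For the vector space structure, given two model functions $f_i = g_{(\sX_i, \sD_i)}$ for $i = 1, 2$, I first pass to a common projective $\OO_v$-model $\sX$ of $X$ dominating both $\sX_1$ and $\sX_2$, for instance the normalization of the closure of the graph of $\id_X$ in $\sX_1 \times_{\spec \OO_v} \sX_2$. Writing $\pi_i : \sX \to \sX_i$ for the induced morphisms, the Green function of a model divisor is invariant under pullback of models, so $f_i = g_{(\sX, \pi_i^* \sD_i)}$. For $\lambda_1, \lambda_2 \in \Q$ the vertical $\Q$-Cartier divisor $\sum_i \lambda_i \pi_i^* \sD_i$ on $\sX$ then has Green function $\sum_i \lambda_i f_i$.

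For stability under maximum, I first reduce (as above) to $f_i = g_{(\sX, \sD_i)}$ on a common model $\sX$ with $\sD_i$ Cartier. Adding to each $\sD_i$ the same sufficiently positive effective vertical Cartier divisor $\sE$ shifts both $f_i$ by the model function $g_\sE$ and, by the vector space property, reduces the problem to the case where $\sD_1, \sD_2$ are already effective, with ideal sheaves $I_i \subset \OO_{\sX}$. Let $\pi : \tilde \sX \to \sX$ be the normalization of the blowup of $\sX$ along the finitely generated ideal $I_1 + I_2$. Then $\tilde \sX$ is again a projective normal $\OO_v$-model of $X$ and $(I_1 + I_2)\OO_{\tilde \sX}$ is invertible, defining an exceptional Cartier divisor $E$.

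The key computation is $g_E = \min(f_1, f_2)$: for $p \in X^{\an}$ reducing to $\tilde p \in \tilde \sX_{\kappa_v}$, pick local equations $s_i$ of $\sD_i$ at $\pi(\tilde p)$ and a local generator $t$ of $(s_1, s_2)\OO_{\tilde \sX, \tilde p}$, so that $s_i = t u_i$ with $u_1, u_2$ generating the unit ideal at $\tilde p$. At least one $u_i$ is then a unit at $\tilde p$, hence $|u_i(p)| = 1$, giving $|t(p)| = \max(|s_1(p)|, |s_2(p)|)$ and $g_E(p) = \min(f_1(p), f_2(p))$. Combining with the vector space property already established, $\max(f_1, f_2) = f_1 + f_2 - g_E$ is the Green function of the vertical divisor $\pi^* \sD_1 + \pi^* \sD_2 - E$ on $\tilde \sX$, hence a model function.

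The main obstacle is the effectivity reduction together with the verification that the blowup yields a valid projective normal $\OO_v$-model when $\OO_v$ is non-Noetherian (the non-discrete non-archimedean case): one needs a suitable effective vertical Cartier divisor $\sE$ dominating both $-\sD_1$ and $-\sD_2$, and one must check that the blowup along the finitely generated ideal $I_1 + I_2$ remains projective and that the reduction map behaves as required for the local Berkovich computation of $g_E$.
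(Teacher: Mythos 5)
The paper does not prove this proposition; it cites it as Lemma~7.8 of Gubler, so your argument is a reconstruction rather than a comparison with a proof on the page. Your reconstruction is correct and follows the standard route, which is also Gubler's: pass to a common dominating model and pull back to get the $\Q$-vector space structure, reduce to effective vertical Cartier divisors by adding a large multiple of a principal vertical divisor, blow up the ideal sheaf $I_1+I_2$ so that the exceptional Cartier divisor $E$ satisfies $g_E=\min(f_1,f_2)$ (the local computation via $s_i=tu_i$ with some $u_i$ a unit at the reduction point is exactly right), and conclude with $\max(f_1,f_2)=f_1+f_2-\min(f_1,f_2)$. One small addition worth making explicit: under the paper's convention the archimedean case is trivial, since there every continuous function is already a model function, so your argument is the non-archimedean one. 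The two obstacles you flag at the end are real but are precisely the points Gubler's framework of models of finite presentation over $\OO_v$ is designed to handle: a vertical Cartier divisor on a quasi-compact model has locally finitely generated ideal sheaf whose support lies in the special fiber, so a power of any nonzero $\pi_0\in\m_v$ bounds it and $\div(\pi_0^N)+\sD_i$ is effective; the blow-up along the finitely generated $I_1+I_2$ (followed by integral closure, which stays finite and projective in this setting) is again a projective model; and the identity $|u(p)|=1$ for a unit $u\in\OO_{\tilde\sX,r_{\tilde\sX}(p)}$ together with $|u(p)|\le 1$ for arbitrary $u$ in that local ring is the defining property of the reduction map, so the computation $|t(p)|=\max(|s_1(p)|,|s_2(p)|)$ holds in full generality.
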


\begin{prop}[\cite{gublerLocalHeightsSubvarieties1998} Theorem 7.12]\label{prop:density-model-functions-projective}
  If $X$ is a projective variety over $\K_v$, then model functions are dense in the set of continuous functions
  from $X^{\an}$ to $\R$ with respect to the uniform topology.
\end{prop}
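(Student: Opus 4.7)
The plan is to apply the Kakutani--Stone theorem, which guarantees uniform density of a sublattice of $C(K,\R)$ on a compact Hausdorff space $K$ provided it separates points and its uniform closure contains constants. Since $X$ is projective, $X^{\an}$ is compact and Hausdorff, so the framework applies.

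By Proposition~\ref{prop:model-functions-stable-by-sum-and-max}, the set $\mathcal{M}$ of model functions is a $\Q$-vector subspace of $C(X^{\an}, \R)$ stable under $\max$, and consequently also under $\min$ (via $\min(f,g) = -\max(-f,-g)$). It therefore forms a sublattice, so only two conditions remain to verify.

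First, I would check that $\mathcal{M}$ contains a dense set of constant functions: by Example~\ref{ex:constant-green-function}, every $\log|\lambda|_v$ with $\lambda \in \K_v^\times$ is a constant model function, and $\Q$-linear combinations then yield constants dense in $\R$ (one may assume the valuation is nontrivial, as otherwise $X^{\an}$ reduces to a set of classical points and the statement is a classical Stone--Weierstrass application). Second, and more substantively, $\mathcal{M}$ must separate points of $X^{\an}$. Given distinct $x, y \in X^{\an}$, the associated seminorms on $\K_v(X)$ differ, so there exists a rational function $f \in \K_v(X)^\times$ with $|f(x)|_v \neq |f(y)|_v$. On any projective model $\sX$ of $X$ over $\OO_v$, the principal divisor $\div_\sX(f)$ decomposes into the Zariski closure of $\div_X(f)$ and a vertical correction $V_f$, and the continuous Green function $g_{(\sX, V_f)}$ equals $-\log|f|_v$ minus the (singular) Green function of the horizontal closure. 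By refining to a sufficiently fine regular model --- using the fundamental fact that $X^{\an}$ is the inverse limit of reductions $\sX_{\kappa_v}$ --- one arranges that $r_\sX(x)$ and $r_\sX(y)$ lie on distinct irreducible components of the special fiber; then a vertical $\Q$-Cartier divisor supported on a component containing only one of the two reductions yields a continuous model function genuinely distinguishing $x$ from $y$.

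The main obstacle is exactly this refinement step: one must combine the algebraic separation provided by rational functions with the birational geometry of integral models to produce an honest \emph{vertical} $\Q$-Cartier divisor whose Green function separates the prescribed pair of Berkovich points. Once this is carried out, Kakutani--Stone applied to $\mathcal{M}$ immediately yields uniform density in $C(X^{\an}, \R)$.
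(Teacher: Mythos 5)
The paper does not actually prove this proposition: it is cited directly from Gubler (Theorem 7.12 of \cite{gublerLocalHeightsSubvarieties1998}), and the only related argument in the paper is the proof of the compactly-supported analogue, Theorem~\ref{thm:density-model-functions-with-compact-support}, which also reduces the separation of points to Lemma~\ref{lemme:compact-model-functions-separate-points}. Your overall strategy---Kakutani--Stone applied to the $\Q$-sublattice of model functions, using Proposition~\ref{prop:model-functions-stable-by-sum-and-max}, density of the constants $\Q \cdot \log|\K_v^\times|$, and a point-separation lemma---is the correct one and is exactly the skeleton of both Gubler's proof and the paper's Lemma~\ref{lemme:compact-model-functions-separate-points}.

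However, your separation-of-points step has a genuine gap, which you partially acknowledge. First, the detour through a rational function $f$ with $|f(x)|\neq|f(y)|$ does not produce a separating model function: the continuous Green function $g_{(\sX,V_f)}$ of the vertical part of $\hat\div(f)$ differs from $-\log|f|$ by the singular Green function of the horizontal closure, and this correction can very well cancel the inequality $|f(x)|\neq|f(y)|$; nothing in the decomposition forces $g_{(\sX,V_f)}(x)\neq g_{(\sX,V_f)}(y)$. Second, the statement that one may refine the model so that $r_\sX(x)$ and $r_\sX(y)$ lie on distinct irreducible components of the special fibre is stronger than what the inverse-limit (Bosch--Lütkebohmert) theorem gives directly; that theorem only yields $r_\sX(x)\neq r_\sX(y)$. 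What one actually wants is weaker and cleaner: a vertical Cartier divisor $\sE$ with $r_\sX(x)\in\Supp\sE$ and $r_\sX(y)\notin\Supp\sE$, so that $g_\sE(x)>0=g_\sE(y)$. The paper's Lemma~\ref{lemme:compact-model-functions-separate-points} shows how to produce this by successive blowups: first blow up the intersection of the closures of $r_\sX(x)$ and $r_\sX(y)$ so their strict transforms no longer meet, then blow up the closure of $r_\sX(x)$ and take the exceptional divisor. Replacing your rational-function argument by this blowup argument closes the gap. A minor additional error: for a trivially valued field, $X^{\an}$ does not reduce to the set of classical points (it is a nontrivial cone over $X$), so the parenthetical dismissal of that case is not justified; one should instead simply work under the standing hypothesis of a nontrivial valuation, as Gubler does.
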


If $v$ is archimedean, we make the convention that every arithmetic divisor is a model arithmetic divisor and every
metrised line bundle is a model one.

\subsection{Positivity}
\label{subsec:positivity}
Let $X$ be a projective variety over $\K_v$. If $v$ is archimedean, then we have the usual notion of smooth
and plurisubharmonic functions over $X_\C(\C)$ and of closed positive current. If $v$ is non-archimedean,
Chambert-Loir and Ducros developped a theory of smooth and plurisubharmonic functions and closed positive current over
$X^{\an}$ in \cite{chambert-loirFormesDifferentiellesReelles2012} which mimic the complex setting. In particular, the
operator $dd^c$ is defined in the non-archimedean setting and we have the Poincaré-Lelong formula: If $P$ is a rational
function over $X$, then
\begin{equation}
  dd^c (\log \left| P \right|) = \delta_{\div(P)}.
  \label{eq:<+label+>}
\end{equation}
Therefore, if $\overline M$ is a metrised line bundle over $X$ with a continuous metric, $U$ is an open subset of
$X^{\an}$ and $s$ is a regular section of $M$ over $U$, then we have the equality of currents.
\begin{equation}
  c_1 (\overline M) = \delta_{\div(s)} + dd^c \log \left| \left| s \right| \right|^{-1}.
  \label{eq:<+label+>}
\end{equation}
Let $\overline L$ be a metrized line bundle over $X$. If $v$ is archimedean, we say that $\overline L$ is
\emph{semipositive} if $g$ is plurishubarmonic.
If $v$ is non-archimedean, we say that $\overline L$ is \emph{semipositive} if there exists a sequence of
models $(\sX_n, \sL_n)$ of $(X,L)$ such that $\sL_n$ is nef and $d_\infty(\sL_n, \overline L) \rightarrow 0$. In
particular a model metrised line bundle $(\sX, \sL)$ is semipositive if and only if it is nef.

We say that a metrised line bundle is \emph{integrable} if it is the difference of two semipositive line bundles.

All these notions have analog for arithmetic divisors $\overline D$ by checking the definition on
$\OO_X(\overline D)$.

\subsection{Chambert-Loir measures}\label{subsec:chambert-loir-measures-proj}
Let $X$ be a projective variety of dimension $n$ over $\K_v$, let $\overline L_1, \cdots, \overline L_n$ be
integrable metrised line bundles. By the theory of Bedford and Taylor in the archimedean case and by
\cite{chambert-loirFormesDifferentiellesReelles2012} \S5 in the non-archimedean case. We can define the current
$c_1(\overline L_i)$ and the measure
\begin{equation}
  \label{eq:45}
  c_1(\overline L_1) \cdots c_1(\overline L_n) := c_1(\overline L_1) \wedge \cdots \wedge c_1(\overline L_n)
\end{equation}
over $X^{\an}$.

If $\K_v = \R$, then we make the base change to $\C$ to define the Chambert-Loir measure over $X_\C (\C)$ using
Bedford-Taylor theory and then pushforward the measure to $X_\R^{\an}$.

If $\overline D$ is an arithmetic divisor, then we set $c_1(\overline D) = c_1 (\OO_X (\overline D))$. This is
well defined as for any rational function $P$, $c_1 (\OO_{X} (\hat \div(P))) = 0$.
\begin{prop}[Corollaire 6.4.4 of \cite{chambert-loirFormesDifferentiellesReelles2012}]
  \label{prop:1}
  If all the $\overline L_i$'s are semipositive, then the measure
$\mu := c_1(\overline L_1) \cdots c_1(\overline L_n)$ is a positive measure of total mass
$L_1 \cdots L_n$. Furthermore, the measure does not charge any analytic subvarieties, that is $\mu(Y^{\an}) = 0$ for
every closed strict subvariety $Y \subset X$.
\end{prop}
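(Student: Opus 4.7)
My plan would follow the standard two-step strategy of Chambert-Loir and Ducros: first establish everything in the model case, where the Chambert-Loir measure is explicit, and then pass to the $d_\infty$-limit.

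\textbf{Step 1 (Model case).} After pulling back to a common projective model $\sX$ of $X$ over $\OO_v$, assume each $\overline L_i$ is the model metric of a nef $\Q$-line bundle $\sL_i$ on $\sX$. I would then establish the explicit formula
\[
c_1 (\overline L_1) \cdots c_1 (\overline L_n) = \sum_\Gamma m_\Gamma \, (\sL_1 \cdots \sL_n \cdot \Gamma) \, \delta_{x_\Gamma},
\]
where $\Gamma$ runs over the irreducible components of the special fiber $\sX_{\kappa_v}$ with multiplicity $m_\Gamma$, and $x_\Gamma$ is the unique divisorial point of $X^{\an}$ whose reduction is the generic point of $\Gamma$. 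Each coefficient is non-negative by nefness, so the measure is positive; summing and using that $\sX_{\kappa_v}$ represents the generic fiber class gives total mass $\sL_1 \cdots \sL_n \cdot \sX_{\kappa_v} = L_1 \cdots L_n$. Finally, since $c(x_\Gamma)$ is the generic point of $X$, the support of the measure is disjoint from $Y^{\an}$ for every strict closed subvariety $Y \subset X$.

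\textbf{Step 2 (Limit).} For general semipositive $\overline L_i$, I would pick approximations by nef models with $d_\infty \to 0$, and use multilinearity together with the $d_\infty$-continuity of the Monge-Ampère product on semipositive metrics to deduce weak convergence of the associated measures $\mu_k$ to $\mu := c_1(\overline L_1) \cdots c_1(\overline L_n)$. Positivity and total mass pass trivially to the weak limit.

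\textbf{Main obstacle.} The hard part is that $\mu_k(Y^{\an}) = 0$ does not pass to weak limits for closed sets. To handle this, I would exploit pluripolarity: choose a semipositive line bundle $\overline N$ over $X$ equipped with a non-zero global section $s$ vanishing on $Y$, and set $\phi = -\log|s|$, so that $\phi$ is plurisubharmonic with $\phi \equiv +\infty$ on $Y^{\an}$. A Chern-Levine-Nirenberg type estimate, which remains uniform along the approximating sequence of nef models, should give a bound of the shape $\mu_k(\{\phi > M\}) \leq C/M$ with constants independent of $k$. Passing to the weak limit and then letting $M \to \infty$ forces $\mu(Y^{\an}) = 0$. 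This pluripolarity argument, carried out both in the archimedean Bedford-Taylor setting and the non-archimedean Chambert-Loir-Ducros setting, is where most of the technical work sits.
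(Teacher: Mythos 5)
This proposition is imported verbatim from Chambert-Loir and Ducros (Corollaire 6.4.4 of \cite{chambert-loirFormesDifferentiellesReelles2012}); the paper does not prove it, so there is no in-paper argument to compare against. Your sketch is, in substance, the standard proof of that result, and it is sound. Two small remarks on scope: Step 1 (reduction map, divisorial points $x_\Gamma$, explicit atomic formula) is purely non-archimedean; for $v$ archimedean the ``model'' case means a smooth metric, where the Monge--Amp\`ere measure is absolutely continuous with respect to a volume form and the non-charging statement is immediate, after which the Bedford--Taylor limit argument takes over. You do flag this at the end, so it is not a gap, just worth keeping the two cases cleanly separated. The one place that deserves care is the uniform Chern--Levine--Nirenberg bound: the quantity $\int \phi\, d\mu_k$ with $\phi = -\log\|s\|_{\overline N}$ is a difference of local intersection numbers (via the integration-by-parts formula), and its boundedness in $k$ follows from $d_\infty$-continuity of the local pairing on semipositive metrics; in the non-archimedean model case it is finite because each $\mu_k$ is supported on points reducing to generic points of the special fiber, hence off $\mathrm{div}(s)^{\an}$. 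With that made explicit, the Portmanteau inequality for the open sets $\{\phi > M\}$ closes the argument as you describe.
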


\subsection{Local intersection number}
\label{subsec:local-inters-numb}
For the definition of the local intersection number we follow \S 3.6 of \cite{chenArithmeticIntersectionTheory2021}.
If $\overline M$ is an integrable metrized line bundle with trivial underlying line bundle and
$\overline L_1, \cdots, \overline L_n$ are integrable metrized line bundle then we define the local intersection number
\begin{equation}
  \label{eq:63}
  \overline M \cdot \overline L_1 \cdots \overline L_n := \int_{X^{\an}} g_{\overline M} c_1 (\overline L_1) \cdots c_1 (\overline L_n).
\end{equation}
And we have the following integration by parts formula, suppose $s_n$ is a rational section of $L_n$, then
\begin{equation}
  \overline M \cdot \overline L_1 \cdots \overline L_n = \left( \overline M \cdot \overline L_1 \cdots \overline
  L_{n-1} \right)_{\div(s_n)} - \int_{X^{\an,v}} \log \left| \left| s_n \right| \right|_{\overline L_n} c_1 (\overline M) c_1
  (\overline L_1) \cdots c_1 (\overline L_{n-1}).
  \label{eq:<+label+>}
\end{equation}
This intersection product is multilinear, symmetric and invariant by extension of scalars.

\subsection{Arithmetic divisors and metrised line bundles over quasiprojective varieties.}\label{subsec:local-setting-quasiprojective-varieties}
The main reference for this section is \cite{yuanAdelicLineBundles2023} \S 3.6.
Let $\K_v$ be a complete field and $U$ be a quasiprojective variety over $\K_v$.
A \emph{model} arithmetic divisor over $U$ is a model arithmetic divisor $\overline \sD$ over any projective
model $X$ of $U$ over $\K_v$ such that $\sD_{|U}$ is a Cartier divisor. That is the set $\hat\Div
(U )_\mod$ of model arithmetic divisor over $U$ is defined as
\begin{equation}
  \hat\Div (U )_\mod := \varinjlim_{X \supset U} \hat\Div (X )_\mod.
  \label{eq:<+label+>}
\end{equation}

If $v$ is archimedean, a \emph{boundary divisor} over $U$ is a model arithmetic divisor $\overline \sD_0$ defined
on some projective model $X$ with Green function $g_0$ such that $\Supp D_0 = X \setminus U$ and
$g_0 > 0$. If $v$ is non-archimedean a \emph{boundary divisor} is a model arithmetic divisor $\overline \sD_0$ such that
$\sD_0$ is an effective Cartier divisor.
We have an
\emph{extended norm} on $\hat\Div (U )_\mod$ defined by
\begin{equation}
  \left| \left| \overline \sD \right| \right|_{\overline D_0} := \inf \left\{ \epsilon >0 : - \epsilon \overline \sD_0
  \leq \overline \sD \leq \epsilon \overline \sD_0 \right\}.
  \label{eq:<+label+>}
\end{equation}
An \emph{arithmetic divisor} $\overline D$ over $U$ is an element of the completion of $\hat\Div (U )_\mod$
with respect to $\left| \left| \cdot \right| \right|_{\overline D_0}$. More precisely, it is given by a sequence
$(\sX_n, \overline \sD_n)$ of model arithmetic divisors such that there exists a sequence of positive rational numbers
$\epsilon_n \rightarrow 0$ such that
\begin{equation}
  \forall m \geq n, \quad -\epsilon_n \overline \sD_0 \leq \overline \sD_m - \overline \sD_n \leq \epsilon_n \overline \sD_0.
  \label{eq:inequality-model-divisors}
\end{equation}
Such a sequence of model arithmetic divisors is called a \emph{Cauchy sequence}.
Notice that for every $m, n, {\sD_m}_{|U} = {\sD_n}_{|U} =: D$. In terms of Green function, this is
equivalent to checking that over $\left( U \setminus \Supp D \right)^{\an}$,
\begin{equation}
  - \epsilon_n g_0 \leq g_m - g_n \leq \epsilon_n g_0.
  \label{eq:<+label+>}
\end{equation}
In particular, the sequence $g_n$ converges uniformly locally towards a continuous function $g_{\overline D}$ which we call the
\emph{Green function} of $\overline D$. The topology and the completed space do not depend on the choice of $\overline
D_0$. We write $\hat\Div (U )$ for the set of arithmetic divisors over $U$.
For metrised line bundles over $U$ the definitions are analogous. We write $\hat \Pic(U )$ for
the set of metrised line bundles over $U$. In particular if $\overline L_i = \lim_k \overline L_{i,k}$, then the
geometric intersection number 
\begin{equation}
  L_{1,k} \cdots L_{n,k}
  \label{<+label+>}
\end{equation}
converges towards a number that we write 
\begin{equation}
  L_1 \cdots L_n.
  \label{<+label+>}
\end{equation}

\begin{dfn}
An arithmetic divisor $\overline D$ over $U$ is
\begin{itemize}
  \item \emph{vertical} if it is the limit of vertical model arithmetic divisors.
  \item \emph{strongly nef} if for the Cauchy sequence $(\overline \sD_i)$ defining
it we can take for every $\overline \sD_i$ a semipositive model arithmetic divisor.
\item \emph{nef} if there exists a strongly nef arithmetic divisor $\overline A$ such that for all $m \geq 1, m\overline
  D + \overline A $ is strongly nef.
\item \emph{integrable} if it is the difference of two strongly nef arithmetic divisors.
\end{itemize}
and we have analog definitions for metrised line bundles.
\end{dfn}

\subsection{Chambert-Loir measures}
If $\overline L_1, \dots, \overline L_n$ are integrable metrised line bundles over $U$ a quasiprojective variety over
$\K_v$ of dimension $n$, then we define the Chambert-Loir measure
\begin{equation}
  c_1 (\overline L_1) \cdots c_1 (\overline L_n)
  \label{eq:<+label+>}
\end{equation}
over $U^{\an}$ as the weak limit of the measures
\begin{equation}
  c_1 (\overline L_{1,i}) \cdots c_1 (\overline L_{n,i})
  \label{eq:<+label+>}
\end{equation}
where $(\overline L_{k,i})_{i \geq 1}$ is any Cauchy sequence defining $\overline L_k$. If all the $\overline L_i$'s are
nef, then this is a positive measure, see \cite{yuanAdelicLineBundles2023}, \S 3.6.7. We can still compute the total
mass of this measure.

\begin{thm}[Theorem 1.2 of \cite{guoIntegrationFormulaChern2025}]\label{thm:total-mass-measure-quasiprojective-setting}
  Let $\overline L_1, \dots, \overline L_n$ be integrable line bundles over $U$, then
  \begin{equation}
    \int_{U^{\an}} c_1 (\overline L_1) \cdots c_1 (\overline L_n) = L_1 \cdots L_n.
    \label{eq:<+label+>}
  \end{equation}
\end{thm}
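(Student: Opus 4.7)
The plan has three stages: reduce to the strongly nef case by multilinearity, identify the total mass at each finite level of a Cauchy approximation via Proposition \ref{prop:1}, and upgrade the weak convergence of Chambert-Loir measures on the locally compact space $U^{\an}$ to convergence of total mass via uniform tightness.

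First, writing $\overline L_k = \overline A_k - \overline B_k$ with $\overline A_k, \overline B_k$ strongly nef and expanding both $\prod_k c_1(\overline L_k)$ and $L_1 \cdots L_n$ into the same alternating $2^n$ sums, it suffices to treat the case where every $\overline L_k$ is strongly nef, with Cauchy approximations $\overline L_{k,i}$ by semipositive model metrised line bundles.

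After dominating models, at each level $i$ all $\overline L_{k,i}$ live on a single projective compactification $X_i \supset U$. The measure $\mu_i := c_1(\overline L_{1,i}) \cdots c_1(\overline L_{n,i})$ on $X_i^{\an}$ has total mass $L_{1,i} \cdots L_{n,i}$ by Proposition \ref{prop:1}. Applying the second part of Proposition \ref{prop:1} to the proper closed subvariety $Y = X_i \setminus U$ gives $\mu_i((X_i \setminus U)^{\an}) = 0$, so $\mu_i$ is concentrated on $U^{\an}$ with the same total mass. By definition of the quasiprojective intersection product, $L_{1,i} \cdots L_{n,i} \to L_1 \cdots L_n$.

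To conclude, I must show the weak limit $\mu$ on $U^{\an}$ still has total mass $L_1 \cdots L_n$. Choose a reference strongly nef $\overline L_0$ and $\epsilon_i \to 0$ with $-\epsilon_i \overline L_0 \leq \overline L_{k,j} - \overline L_{k,i} \leq \epsilon_i \overline L_0$ for $j \geq i$, and telescope
\begin{equation*}
\mu_j - \mu_i = \sum_{k=1}^n c_1(\overline L_{1,j}) \cdots c_1(\overline L_{k-1,j}) \bigl(c_1(\overline L_{k,j}) - c_1(\overline L_{k,i})\bigr) c_1(\overline L_{k+1,i}) \cdots c_1(\overline L_{n,i}).
\end{equation*}
Each summand is a signed measure whose total variation ought to be bounded by $\epsilon_i$ times a product of geometric intersection numbers involving $\overline L_0$, giving a Cauchy-in-total-variation estimate on $(\mu_i)$ and hence the uniform tightness needed to upgrade weak convergence to convergence of total mass.

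The main obstacle is precisely this last bound: translating the Green-function Cauchy estimate $-\epsilon_i \overline L_0 \leq \overline L_{k,j} - \overline L_{k,i} \leq \epsilon_i \overline L_0$ into a total-variation bound on the Chambert-Loir signed measures above. On the non-compact space $U^{\an}$, weak convergence alone does not preserve total mass, and mass could a priori escape toward the boundary in the limit. The required estimate is itself a form of arithmetic Hodge-index inequality and is the technical heart of \cite{guoIntegrationFormulaChern2025}.
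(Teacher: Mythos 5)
This statement is a citation of Theorem~1.2 of \cite{guoIntegrationFormulaChern2025}; the present paper contains no proof of it, so there is nothing internal to compare against. Evaluating your sketch on its own terms: the first two stages are fine. Multilinearity reduces to the strongly nef case, and Proposition~\ref{prop:1} (Chambert-Loir--Ducros) does give that each $\mu_i$ has total mass $L_{1,i}\cdots L_{n,i}$ and does not charge $(X_i\setminus U)^{\an}$, so $\mu_i(U^{\an}) = L_{1,i}\cdots L_{n,i} \to L_1\cdots L_n$. The issue, as you yourself flag, is entirely in the third stage.

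The telescoping step does not yield a total-variation bound from the boundary-norm estimate, and this is a real gap, not a gloss. Each summand in your telescope has the form $dd^c(g_j - g_i) \wedge T$ where $T$ is a product of positive $(1,1)$-currents, and what the Cauchy condition gives is a \emph{pointwise} bound $|g_j - g_i| \le \epsilon_i\, g_0$ on Green functions; but $dd^c$ of a uniformly small function need not be small as a current, and integrating against a test function $\phi$ with compact support only controls $\int (g_j - g_i)\, dd^c\phi \wedge T$, which depends on $\phi$ through $dd^c\phi$ rather than through $\sup|\phi|$. Moreover $g_0$ blows up at the boundary of $U^{\an}$, so even the pointwise control degenerates exactly where mass might escape. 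Ruling out escape of mass toward $\partial U^{\an}$ is precisely what requires a nontrivial input (in Guo's proof, a Hodge-index/positivity estimate), and your sketch assumes rather than establishes it. As written, the proposal is an honest outline of the reduction with the decisive step left open.
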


\section{Definition of the intersection number}\label{sec:intersection number}
  \subsection{Density results}
  Let $U$ be a quasiprojective variety over $\K_v$ and let $X$ be a projective model of $U$ over $\K_v$.
  Let $\Omega \Subset U^{\an}$ be a compact subset. We write $\mathbf M_\Omega (X^{\an})$ for the $\Q$-vector space
  generated by model functions over $X^{\an}$ which are constant outside $\Omega$.  

\begin{thm}\label{thm:density-model-functions-with-compact-support}
  Let $U$ be a quasiprojective variety over $\K_v$ and let $\Omega ' \Subset U^{\an}$ be a compact subset.
  For any projective model $X$ of $U$ over $\K_v$, there exists a compact subset $\Omega \subset U^{\an}$ containing
  $\Omega '$ such that $M_\Omega (X^{\an})$ is dense in $\sC^0_\Omega (U^{\an}; \R)$ with respect to the supremum norm
  where $\sC^0_\Omega (U^{\an}; \R)$ is the set of real continuous function over $U^{\an}$ which are constant outside
  $\Omega$.
\end{thm}
When $v$ is archimedean, the theorem is easy. Indeed, let $W \subset U^{\an}$ be a relatively compact connected open subset that
contains $\Omega '$, then $\Omega = \overline W$ works because by the Stone-Weierstrass theorem for locally compact
subspace (see \cite[44D]{willardGeneralTopology2004}), the set of smooth real functions over $W$ vanishing on $\Omega
\setminus W$ is dense in the set of continuous function over $W$ vanishing on $\Omega \setminus W$, adding constant
functions to this set, we get the result for archimedean places. 

Suppose now that $v$ is non-archimedean. We are going to apply a mix of the boolean version of the Stone-Weierstrass
theorem and the Stone-Weierstrass theorem for locally compact spaces. We make the following definition:
   If $Z$ is a scheme and $Y_1, Y_2$ are closed subsets, by blowing up along $Y_1 \cap Y_2$
  we mean blowing up $Z$ along the ideal sheaf $I_{Y_1} + I_{Y_2}$ and then normalise. In the blow up, the strict
  transform of $Y_1$ and $Y_2$ do not meet anymore.
  Start with the following result.
  \begin{prop}\label{prop:bon-model}
    There exists a projective model $\sX$ of $X$ over $\OO_v$ such that there is a closed subset $\sW \subset \sX$ of the
    special fiber with $\Omega ' \subset r_{\sX}^{-1} (\sW)$ and 
      \begin{equation}
        \sW \cap \overline{X \setminus U} = \emptyset.
        \label{<+label+>}
      \end{equation}
  \end{prop}
  \begin{proof}
    First we can assume that there exists a model $\sX$ such that for any $y \in \Omega ', r_{\sX} (y) \not \in
    \overline{X \setminus U}$. Indeed, otherwise this would imply the following: for any
    model $\sX$ of $X$ over $\OO_v$ and for any vertical Cartier divisor $\sD$ over $\sX$ there exists $y \in \Omega '$
  and $z \in \Delta^{\an}$ such that $g_{(\sX,\sD)} (y) = g_{(\sX,\sD)} (z)$ where $\Delta = X \setminus U$. But by
density of the model functions (Proposition \ref{prop:density-model-functions-projective}) and compactness of $\Omega '$
and $\Delta^{\an}$ this would imply the same statement for any continuous function over $X^{\an}$. This is absurd since
$\Omega ' \cap \Delta^{\an} = \emptyset$. 

Now, for any $x \in \Omega '$, define $\sW_x = \overline{r_{\sX}(x)}$ and $W_x = r_{\sX}^{-1}(\sW_x)$. This is an open
subset of $U^{\an}$ and 
\begin{equation}
  \Omega ' \subset \bigcup_{x \in \Omega '} W_x.
  \label{ }
\end{equation}
By compactness, there exists $x_1, \dots, x_r \in \Omega '$ such that $\Omega ' \subset W_{x_1} \cup \cdots \cup
\sW_{x_r}$. Write $\sW_i := \sW_{x_i}$ and $\sW_i = \sW_{x_i}$. Suppose that $\sW_1 \cap \overline \Delta \neq \emptyset$. Then, we
replace $\sX$ by the normalised blow up along $\sZ =\sW_1 \cap \overline \Delta$. Now, by assumption, $r_{\sX}^{-1} (\sZ) \cap
\Omega ' = \emptyset$ and now in this new model $\sX$ we have that $\overline{r_{\sX} (x_1)} \cap \overline \Delta =
\emptyset$ because the strict transform do not intersect anymore after blowing up. Doing this for every $x_i$ we get the
desired model $\sX$, indeed we define $\sW = \sW_1 \cup \cdots \cup \sW_r$.
\end{proof}

Define $\Omega \Subset U^{\an}$ to be the closure of $W := r_\sX^{-1} (\sW)$ where $\sX$ and $\sW$ come from Proposition
\ref{prop:bon-model}. It is a compact subset because
$W \subset U^{\an} \setminus r_\sX^{-1} (\overline \Delta)$. We show the following lemma, the proof of which was suggested to the author by Xinyi Yuan.
  \begin{lemme}\label{lemme:compact-model-functions-separate-points}
    Let $x,y \in W$ with $x \neq y$, then there exists a projective model $\sX'$ of $X$  over
    $\OO_v$ with a morphism $\pi : \sX'
    \rightarrow \sX$ and a vertical Cartier divisor $\sD$ over $\sX'$ such that $g = g_{(\sX', \sD)}$ vanishes outside
    $W$ and $g(x) \neq g(y)$.
  \end{lemme}
  \begin{proof}
    Let $x, y \in W$ with $x \neq y$. Since model functions separates points, there exists a model $\sZ$ of $X$ such
    that $r_\sZ (x) \neq r_\sZ (y)$. Let $\sX '$ be a model of $X$ that dominates both $\sZ$ and $\sX$ and write
    $\pi: \sX ' \rightarrow \sX$ for the morphism.
    We get $r_{\sX '} (x) \neq r_{\sX '} (y)$ and 
    \begin{equation}
      W = r_{\sX'}^{-1} (\pi^{-1} (\sW)).
      \label{<+label+>}
    \end{equation} 
    Now over $\sX '$ we do the following procedure.
    \begin{enumerate}
      \item \label{item:step1} Blow up $\sX'$ along $\overline{r_{\sX'}(x)} \cap
        \overline{r_{\sX'}(y)}$ and let $\sX^{(1)}$ be the newly obtained projective model.
      \item \label{item:step3} Blow up $\sX^{(1)}$ along $\overline{r_{\sX^{(1)}}(x)}$ and let $\sE$ be the (possibly
        reducible) exceptional divisor. Let $\sX^{(2)}$ be the newly obtained projective model of $X$. The divisor
        $\sE$ might not be irreducible but it is a Cartier divisor over $\sX^{(2)}$ by property of the blow-up.
    \end{enumerate}
    Notice that the center of all these blowups are on the special fiber and are contained in (the preimage of)
    $\sW$, hence $\sX^{(1)}$ and $ \sX^{(2)}$ are projective models of $X$ over $\OO_v$ and we still have $W =
    r_{\sX^{(2)}}^{-1} (\pi^{-1} (\sW))$.
    We have that $g_\sE (x) > 0$ because $r_{\sX^{(2)}}(x) \in \Supp \sE$ and step \eqref{item:step1} guarantees that
    $g_\sE(y) = 0$. Finally we have that $g_{\sE}$ vanishes outside $W$ because $\sE$ is supported over $\pi^{-1}
    (\sW)$.
  \end{proof}
  We can now finish the proof of Theorem \ref{thm:density-model-functions-with-compact-support}. Let $\Omega = \overline W$ and define $\Omega^A := \Omega / \partial \Omega$. That is we define the equivalence
  relation over $\Omega$ such that any two points on the boundary of $\Omega$ are equivalent and $\Omega^A$ is the
  quotient of this equivalence relation equipped with the quotient topology. Equivalently, $\Omega^A$ is the Alexandroff
  compactification of $W$, the interior of $\Omega$. In particular, $\Omega^A$ is compact
  Hausdorff. Now we have a homeomorphism 
  \begin{equation}
    \sC^0_\Omega (U^{\an}, \R) \rightarrow \sC^0 (\Omega^A, \R)
    \label{<+label+>}
  \end{equation}
  both equipped with the supremum norm. Indeed, every continuous function which is constant equal to $\lambda \in \R$
  outside $\Omega$ must be equal to $\lambda$ also on $\partial \Omega$.
  Now let $F$ be the image of $\mathbf M(\Omega, X^{\an})$ in $\sC^0 (\Omega^A, \R)$. We have that $F$ satisfies
  \begin{enumerate}
    \item \label{item:constant-functions} There exists a constant function in $F$.
    \item \label{item:separation-of-points} For any $x,y \in \Omega^A, \exists f \in F, f(x) \neq f(y)$.
    \item \label{item:Q-vector-space} $F$ is a $\Q$-vector space.
    \item \label{item:stable-by-max} For any $f,g \in F, f+g, \max(f,g) \in F$.
  \end{enumerate}
  Indeed, \ref{item:constant-functions} holds because $\mathbf M (\Omega, X^{\an})$ contains constant functions. Item
  \ref{item:separation-of-points} follows from
  Lemma \ref{lemme:compact-model-functions-separate-points} and Items \ref{item:Q-vector-space} and
  \ref{item:stable-by-max} follow from Proposition
  \ref{prop:model-functions-stable-by-sum-and-max}. By Lemma 3.3.4 of \cite{moriwakiAdelicDivisorsArithmetic2016}, we
  have that $A$ is dense in $\cC^0 (\Omega^A, \R)$ and the theorem is proven.

\subsection{Intersection number}
We now define the intersection number. We say that a vertical line bundle $\overline M$ is \emph{compactly supported} if
$g_{\overline M}$ is constant outside a compact subset. We define the \emph{support} of $\overline M$ as the largest
connected compact subset $\Omega$ of $U^{\an}$ such that $g_{\overline M}$ is constant outside $\Omega$.
Theorem \ref{thm:density-model-functions-with-compact-support} implies the following. 
\begin{prop}\label{prop:approximation-by-strongly-compactly-supported}
  Let $U$ be a quasiprojective variety over $\C$ and let $\overline M$ be an integrable compactly supported vertical
  line bundle over $U$ then there exists a compact
  subset $\Omega \Subset U^{\an}$ containing the support of $\overline M$ and sequence of model vertical line bundles
  $\overline M_i$ such that $g_{\overline M_i}$ is constant outside $\Omega$ and $g_{\overline M_i} \rightarrow
  g_{\overline M}$ uniformly over $U^{\an}$. In
  particular, we have also the convergence of currents $c_1 (\overline M_i) \rightarrow c_1 (\overline M)$ over
  $U^{\an}$.
\end{prop}
We call such a sequence $(\overline M_i)$ an \emph{approximating} sequence of $\overline M$. If $\overline M$ is an
integrable compactly supported vertical line bundle and $\overline L_1, \dots, \overline L_n$ are
integrable line bundles then we define 
\begin{equation}
  \overline M \cdot \overline L_1 \cdots \overline L_n := \int_{U^{\an}} g_{\overline M} c_1 (\overline L_1) \cdots c_1
  (\overline L_n).
  \label{<+label+>}
\end{equation}
This definition behaves well with approximating sequences.
\begin{thm}\label{thm:local-intersection-number}
  Let $\overline M$ be an integrable compactly supported vertical line bundle over a quasiprojective variety $U$ and
  let $\overline L_1, \dots, \overline L_n$ be integrable metrised line bundles over $U$. For any approximating sequence
  $\overline M_i$ of $\overline M$ we have
  \begin{equation}
    \lim_i \overline M_i \cdot \overline L_{1,i} \cdots \overline L_{n,i} = \overline M \cdot \overline L_1 \cdots
    \overline L_n.
    \label{eq:<+label+>}
  \end{equation}
\end{thm}
\begin{proof}
  Write $\mu_i : = c_1 (\overline L_{1,i}) \cdots c_1 (\overline L_{n,i}) $ and $\mu = c_1 (\overline L_1) \cdots c_1
  (\overline L_n)$. Let $\Omega$ be a compact subset such that for every $i, g_{\overline M_i}$ is constant equal to
  $\lambda_i$ outside of $\Omega$. Then we also have that $g_{\overline M}$ is constant equal to some $\lambda \in \R$
  outside of $\Omega$. We have by definition of the intersection number in the projective case 
  \begin{align}
    \overline M_i \cdot \overline L_{1,i} \cdots \overline L_{n,i} &= \int_{U^{\an}} g_{\overline M_i} c_1 (\overline
    L_{1,i}) \cdots c_1 (\overline L_{n,i}) \\
    &= \int_{\Omega} g_{\overline M_i} c_1 (\overline L_{1,i}) \cdots c_1 (\overline L_{n,i}) + \lambda_i \mu_i
    (U \setminus \Omega).
    \label{<+label+>}
  \end{align}
   Now,
  \begin{align}
    \overline M \cdot \overline L_1 \cdots \overline L_n &= \int_{U^{\an}} g_{\overline M} \mu \\
    &= \int_{\Omega} g_{\overline M} \mu + \lambda \mu (U^{\an} \setminus \Omega).
    \label{<+label+>}
  \end{align}
  Since $\mu$ is the weak limit of $\mu_i$ and by Theorem \ref{thm:total-mass-measure-quasiprojective-setting} we have
  that $\mu_i (U) \rightarrow \mu (U)$ and since $g_{\overline M_i}$ converges uniformly towards $g_{\overline M}$, the
  integral over $\Omega$ converges.
\end{proof}

\begin{prop}\label{prop:weak-limit-integration}
  Let $\overline M$ be an integrable compactly supported vertical line bundle over $U$ with an approximating sequence
  $\overline M_i$. Let $\overline D$ be an arithmetic divisor over $U$, then for every integrable line bundle
  $\overline L_1, \dots, \overline L_{n-1}$, the function $g_{\overline D}$ is $c_1 (\overline M) c_1 (\overline L_1)
  \dots c_1 (\overline L_{n-1})$ integrable and
  \begin{equation}
    \int_{U^{\an}} g_{\overline D} c_1 (\overline M) c_1 (\overline L_1) \cdots c_1 (\overline L_{n-1}) = \lim_i
    \int_{U^{\an}} g_{\overline D} c_1 (\overline M_i) c_1 (\overline L_{1,i}) \cdots c_1 (\overline L_{n-1, i}).
    \label{<+label+>}
  \end{equation}
\end{prop}
\begin{proof}
  By definition of an approximating sequence, there exists a compact subset $\Omega \Subset U^{\an}$
  such that $c_1 (\overline M_i) = 0$ outside $\Omega$.
  Write $\mu_i = c_1 (\overline M_i) c_1 (\overline L_{1,i}) \cdots c_1 (\overline L_{n-1, i})$ and $\mu = c_1 (\overline
  M) c_1 (\overline L_{1}) \cdots c_1 (\overline L_{n-1})$. We also define the $(n-1, n-1)$-current $T_i = c_1 (\overline
  L_{1,i}) \wedge \cdots \wedge c_1 (\overline L_{n-1,i})$ and $T := c_1 (\overline L_1) \wedge \cdots \wedge c_1
  (\overline L_{n-1})$. We have that $\mu_i$ and
  $\mu$ have support in $\Omega$ and $\mu$ is the weak limit of $\mu_i$. Let $q \in \Omega \cap \Supp D$ and let
  $\xi$ be a local equation of $D$ at $p$, then locally at $q$ the function $h = g_{\overline D} + \log \left| \xi
  \right|$ is continuous. Notice that the function $\log \left| \xi \right|$ is smooth and harmonic outside $\supp D$.
  Now let $\Omega ' \Subset \Omega$ be a relatively compact open neighbourhood of $\supp D \cap \Omega$. By Corollaire
  3.3.3 of \cite{chambert-loirFormesDifferentiellesReelles2012} there exists a smooth function $\psi$ such that
  $\psi \equiv 1$ over $\Omega '$ and $\psi \equiv 0$ outside $\Omega$. Using that and the existence of partition of
  unity over $\Omega$ which follows from Proposition 3.3.6 of \cite{chambert-loirFormesDifferentiellesReelles2012} we
  have that there exists a smooth function $\phi : \Omega \setminus \Supp D \rightarrow \R$ which
  is harmonic over $\Omega ' \setminus \Supp D$ and a Green function of $D$ over $\Omega$ (If the absolute value is
  archmidean this follows from classical theory). Write $h = g_{\overline D} +
  \phi$, which is continuous over $\Omega$. Now,
  \begin{equation}
    \lim_i \int_{U^{\an}} g_{\overline D} \mu_i = \lim_i \int_\Omega g_{\overline D} \mu_i = \lim_i \int_\Omega h
    \mu_i - \int_{\Omega} \phi c_1 (\overline M_i) c_1 (\overline L_{1,i}) \cdots c_1 (\overline L_{n-1,i}).
    \label{eq:<+label+>}
  \end{equation}
  The first integral converges towards $\int_{\Omega} h \mu$ by definition of the weak limit of measures and for the
  second term recall that we have by the Poincaré-Lelong formula that $c_1 (\overline M_i) = dd^c g_{\overline M_i}$ and
  $c_1 (\overline M) = dd^c (g_{\overline M})$. Therefore,
  \begin{equation}
    \int_{\Omega} \phi dd^c (g_{\overline M_i}) \wedge T_i = \int_{\Omega} g_{\overline M_i} dd^c \phi \wedge T_i.
    \label{eq:<+label+>}
  \end{equation}
  Notice that $dd^c \phi \wedge T_i$ is supported over the relatively compact subset $\Omega \setminus \Omega '$. Since
  $g_{\overline M_i}$ converges uniformly towards $g_{\overline M}$ over $\Omega$ we have that this integral converges
  towards
  \begin{equation}
    \int_{\Omega} g_{\overline M} dd^c \phi \wedge T = \int_\Omega \phi c_1 (\overline M) \wedge T = \int_\Omega \phi
    \mu.
    \label{eq:<+label+>}
  \end{equation}
  Thus,
  \begin{equation}
    \lim_i \int_{U^{\an}} g_{\overline D} \mu_i = \int_\Omega (h - \phi) \mu = \int_{U^{\an}} g_{\overline
    D} \mu.
    \label{eq:<+label+>}
  \end{equation}
\end{proof}

We now prove the integration by parts formula to finish the proof of Theorem \ref{bigthm:intersection-number}.
\begin{thm}\label{thm:integration-by-part}
  Let $U$ be a quasiprojective variety over $\K_v$ and $\overline M$ be an integrable compactly supported vertical
  line bundle. Let $\overline D$ be an arithmetic divisor and write $D = D_U + D_\infty$ where $D_\infty$ is the
  part supported outside $U$ ($D_\infty$ is a limit of divisors supported outside $U$) and $\overline L_1,
  \dots, \overline L_{n-1}$ be integrable metrised line bundles,
  then
  \begin{align}
    \begin{split}
    \overline M \cdot \overline L_1 \cdots \overline L_{n-1} \cdot \overline D =& \left( \overline M \cdot \overline L_1
    \cdots \overline L_{n-1}\right)_{|D_U} + \lambda c_1 (L_1) \cdots c_1 (L_{n-1}) \cdot D_\infty \\
    & \quad \quad + \int_{U^{\an}} g_{\overline D} c_1 (\overline M) c_1 (\overline L_1) \cdots c_1 (\overline L_{n-1})
    \label{eq:<+label+>}
  \end{split}
  \end{align}
\end{thm}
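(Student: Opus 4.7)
The plan is to apply the projective integration-by-parts formula (as recalled in \S\ref{subsec:local-inters-numb}) on a common sequence of projective models, then pass to the limit, exploiting the compact support of the measure $c_1(\overline M)$ to control each term. Concretely, let $\overline A, \overline B$ be strongly nef line bundles adapted to $\overline M$, and write $\overline M_i = \overline A_i - \overline B_i$. Let $\Omega \Subset U^{\an}$ be a compact set outside which every $g_{\overline M_i}$ is constant, equal to some $\lambda_i \to \lambda$. Pick Cauchy sequences $\overline L_{k,i}$ and $\overline D_i$ representing the remaining data on a common sequence of projective models $X_i \supset U$. On each $X_i$ the classical identity of \S\ref{subsec:local-inters-numb}, applied with the canonical section of $D_i$, yields
\begin{equation*}
\overline M_i \cdot \overline L_{1,i} \cdots \overline L_{n-1,i} \cdot \overline D_i = \bigl(\overline M_i \cdot \overline L_{1,i} \cdots \overline L_{n-1,i}\bigr)_{|D_i} + \int_{X_i^{\an}} g_{\overline D_i}\, c_1(\overline M_i) c_1(\overline L_{1,i}) \cdots c_1(\overline L_{n-1,i}).
\end{equation*}

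Next I split $D_i = D_{U,i} + D_{\infty,i}$ according to whether the irreducible components meet $U$ or are supported in $X_i \setminus U$. Because $D_{\infty,i}^{\an} \subset (X_i \setminus U)^{\an}$ is disjoint from $\Omega$, the Green function $g_{\overline M_i}$ restricted to $D_{\infty,i}^{\an}$ is identically $\lambda_i$, so
\begin{equation*}
\bigl(\overline M_i \cdot \overline L_{1,i} \cdots \overline L_{n-1,i}\bigr)_{|D_{\infty,i}} = \lambda_i \cdot L_{1,i} \cdots L_{n-1,i} \cdot D_{\infty,i},
\end{equation*}
which tends to $\lambda \cdot c_1(L_1) \cdots c_1(L_{n-1}) \cdot D_\infty$ by the Cauchy condition and the convergence of geometric intersection numbers. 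The contribution $(\overline M_i \cdot \overline L_{1,i} \cdots \overline L_{n-1,i})_{|D_{U,i}}$ is handled by applying Theorem \ref{thm:local-intersection-number} on each irreducible component of $D_U$, since the restrictions of $\overline A, \overline B$ remain adapted to $\overline M_{|D_U}$ and the restrictions of $\overline L_k$ remain integrable, yielding $(\overline M \cdot \overline L_1 \cdots \overline L_{n-1})_{|D_U}$ in the limit.

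For the integral term, since $g_{\overline M_i}$ is locally constant outside $\Omega$, the measure $\mu_i := c_1(\overline M_i) c_1(\overline L_{1,i}) \cdots c_1(\overline L_{n-1,i})$ is supported in $\Omega$, so the integral over $X_i^{\an}$ equals the integral over $U^{\an}$. To pass to the limit write $\mu := c_1(\overline M) c_1(\overline L_1) \cdots c_1(\overline L_{n-1})$ and decompose
\begin{equation*}
\int_{U^{\an}} g_{\overline D_i} \mu_i - \int_{U^{\an}} g_{\overline D} \mu = \int (g_{\overline D_i} - g_{\overline D})\,\mu_i + \int g_{\overline D} (\mu_i - \mu).
\end{equation*}
The first summand tends to zero because $g_{\overline D_i} \to g_{\overline D}$ with $|g_{\overline D_i} - g_{\overline D}| \leq \epsilon_i g_0$, where the boundary Green function $g_0$ is bounded on a neighbourhood of $\Omega$ and the total masses $\mu_i(\Omega)$ are uniformly bounded by Theorem \ref{thm:total-mass-measure-quasiprojective-setting}; the second summand tends to zero by Proposition \ref{prop:weak-limit-integration}. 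Summing the three contributions gives the claimed formula, and the left-hand side is defined as the limit of the projective intersection numbers, which exists as a by-product of the argument.

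The main obstacle is the second limit above: $g_{\overline D_i}$ need not converge uniformly on $U^{\an}$ since $\overline D$ is a general arithmetic divisor (not strongly compactly supported), while $\mu_i$ converges only weakly. Both difficulties are resolved by strong compact support of $\overline M$: it confines $\mu_i$ and $\mu$ to $\Omega$, and Proposition \ref{prop:weak-limit-integration}, whose proof uses the Chambert-Loir--Ducros harmonic-extension trick to replace $g_{\overline D}$ locally by a continuous function, provides exactly the weak-limit integration statement needed here.
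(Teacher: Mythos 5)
Your proof is correct and follows essentially the same route as the paper: apply the projective integration-by-parts identity on a common sequence of models, split $D_i$ into its $U$-part and boundary part, use constancy of $g_{\overline M_i}$ outside $\Omega$ to reduce the boundary contribution to $\lambda_i$ times a geometric intersection number, pass to the limit in the $D_U$-contribution via Theorem~\ref{thm:local-intersection-number}, and handle the integral term via Proposition~\ref{prop:weak-limit-integration} combined with uniform convergence of $g_{\overline D_i}$ on $\Omega$. Your explicit two-term decomposition $\int (g_{\overline D_i} - g_{\overline D})\mu_i + \int g_{\overline D}(\mu_i - \mu)$ simply makes precise what the paper compresses into one sentence, so there is no substantive difference.
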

where  $\lambda$ is the constant value of $g_{\overline M}$ outside any large enough compact subset of
$U^{\an}$.
\begin{proof}
  By Theorem \ref{thm:local-intersection-number} we have
  \begin{equation}
    \overline M \cdot \overline L_1 \cdots \overline L_{n-1} \cdot \overline D = \lim_i \overline M_i \cdot \overline
    L_{1,i} \cdots \overline L_{n-1,i} \cdot \overline D_i.
    \label{eq:<+label+>}
  \end{equation}
  Notice that for every $i, {D_i}_{|U} = D_U$.
  By the integration by parts formula in the projective setting we have
  \begin{align}
    \begin{split}
    \overline M_i \cdot \overline L_{1,i} \cdots \overline L_{n-1,i} \cdot \overline D_i &= \left( \overline M_i \cdot
    \overline L_{1,i} \cdots \overline L_{n-1,i} \right)_{|D_U} + \left( \overline M_i \cdot
    \overline L_{1,i} \cdots \overline L_{n-1,i} \right)_{|{D_i,\infty}} \\
    & \quad \quad + \int_{U^{\an}} g_{\overline D_i} c_1
    (\overline M_i) c_1 (\overline L_{1,i} ) \cdots c_1 (\overline L_{n-1,i}).
    \label{eq:<+label+>}
  \end{split}
  \end{align}
  Now, for any subvariety $Y \subset U$, $\overline M_{|Y}$ is also compactly supported and ${\overline
  M_{i}}_{|Y}$ is also an approximating sequence.
  Furthermore, for any irreducible component $E$ of $D_\infty$ we have that $\overline {M_i}_{|E}$ is the trivial line bundle
  with constant metric equal to $\lambda_i$ where $\lambda_i$ is the constant value of $g_{\overline M_i}$ over any
  large enough compact subset of $U^{\an}$. Thus,
  \begin{align}
    \begin{split}
    \left( \overline M_i \cdot \overline L_{1,i} \cdots \overline L_{n-1,i} \right)_{|D_\infty} &= \lambda_i c_1
    (L_{1,i})_{|D_\infty} \cdots c_1 (L_{n-1,i})_{|D_{i,\infty}} \\
    &= c_1 (L_{1,i}) \cdots c_1 (L_{n-1,i}) \cdot D_{i,\infty} \\
    &\xrightarrow[i \rightarrow  + \infty]{} c_1 (L_1) \cdots c_1 (L_{n-1}) \cdot D_\infty.
    \label{eq:<+label+>}
  \end{split}
  \end{align}
  Finally the result follows from Theorem \ref{thm:local-intersection-number} and
  Proposition \ref{prop:weak-limit-integration} since by the definition of the boundary topology $g_{\overline D_i}$
  converges uniformly over $\Omega \setminus \Supp D_{|U}$ towards $g_{\overline D}$.
\end{proof}

\section{Local arithmetic Hodge index theorem for quasiprojective varieties}
\label{sec:weak-arith-hodge}
We prove the local arithmetic Hodge index theorem from the introduction. If $\overline L,
\overline M$ are integrable adelic line bundles with $\overline L$ nef and $\overline M$ vertical, we say that
$\overline M$ is $\overline L$-bounded if for $\epsilon > 0$ small enough $\overline L \pm \epsilon \overline M$ is nef.

\begin{thm}\label{thm:arithm-hodge-index-local}
  Let $U$ be a quasiprojective variety over a complete field $\K_v$ of dimension $n$. Let
  $\overline M \in \hat \Pic_c (U )$ be an integrable compactly supported vertical line bundle and
$\overline L_1, \dots, \overline L_{n-1}\in \hat \Pic (U )$ be nef metrised line bundles. Then,
  \begin{equation}
    \overline M^2 \cdot \overline L_1 \cdots \overline L_{n-1} \leq 0.
    \label{<+label+>}
  \end{equation}
  Furthermore, if for every $i, L_i^n > 0$ and $\overline M$ is
  $L_i$-bounded. Then we have equality if and only if $g_{\overline M}$ is constant.
\end{thm}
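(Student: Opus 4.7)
The plan is to follow the strategy of the projective local Hodge index theorem of \cite{yuanArithmeticHodgeIndex2017}, transferring it through the approximations guaranteed by strong compact support. First, by Remark \ref{rmq:strongly-compactly-supported-base-change} and the invariance of local intersection numbers under extension of scalars, I reduce to the case where $\K_v = \C_v$ is algebraically closed. Fix strongly nef $\overline A, \overline B$ adapted to $\overline M$, so that $\overline M_i = \overline A_i - \overline B_i$ has Green function constant outside a fixed compact $\Omega$, and, for each $j$, a strongly nef approximation $\overline L_{j,i}$ of $\overline L_j$, all arranged to live on a common projective compactification $X_i$ of $U$. Applying Theorem \ref{thm:local-intersection-number} with $\overline L_n := \overline M$ (strongly compactly supported, hence integrable), I obtain
\[
\overline M^2 \cdot \overline L_1 \cdots \overline L_{n-1} = \lim_i \overline M_i^2 \cdot \overline L_{1,i} \cdots \overline L_{n-1,i},
\]
each term on the right being an honest projective local arithmetic intersection number on $X_i$. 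The projective local arithmetic Hodge index theorem (applied after an $\epsilon$-ample perturbation which one lets tend to zero, in order to weaken the ``nef and big'' hypothesis to just ``nef'') gives each such term $\leq 0$, and the inequality passes to the limit.

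For the equality case, assume $\overline M^2 \cdot \overline L_1 \cdots \overline L_{n-1} = 0$ with $\overline M$ being $\overline L_j$-bounded for each $j$ and $L_j^n > 0$. For any other strongly compactly supported vertical $\overline N$ and any $t \in \Q$, the line bundle $\overline M + t \overline N$ is again of that type, so the inequality just proved gives $(\overline M + t \overline N)^2 \cdot \overline L_1 \cdots \overline L_{n-1} \leq 0$ for all $t \in \Q$. Expanding as a quadratic in $t$ that vanishes at $t = 0$ while remaining non-positive, the derivative at $0$ must vanish, yielding $\overline M \cdot \overline N \cdot \overline L_1 \cdots \overline L_{n-1} = 0$. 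Rewriting this as $\int_{U^{\an}} g_{\overline N}\, \nu = 0$ with $\nu := c_1(\overline M) \wedge c_1(\overline L_1) \wedge \cdots \wedge c_1(\overline L_{n-1})$, and noting that $\nu$ is a signed measure supported in the compact $\Omega$, Theorem \ref{thm:density-model-functions-with-compact-support} (density of Green functions of compactly supported model vertical bundles in $\cC^0(\Omega, \R)$) forces $\nu = 0$.

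It remains to deduce that $g_{\overline M}$ is itself constant from the vanishing $dd^c g_{\overline M} \wedge c_1(\overline L_1) \wedge \cdots \wedge c_1(\overline L_{n-1}) = 0$. Setting $h := g_{\overline M} - \lambda$ with $\lambda$ the value of $g_{\overline M}$ at infinity, $h$ is compactly supported in $\Omega$, and an integration by parts (justified through the $\overline M_i$-approximations as in the proof of Proposition \ref{prop:weak-limit-integration}) gives
\[
0 = \int_{U^{\an}} h \cdot dd^c h \wedge c_1(\overline L_1) \wedge \cdots \wedge c_1(\overline L_{n-1}) = - \int_{U^{\an}} dh \wedge d^c h \wedge c_1(\overline L_1) \wedge \cdots \wedge c_1(\overline L_{n-1}),
\]
whose right-hand integrand is a non-negative measure by semipositivity of the $\overline L_j$. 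The hypothesis $L_j^n > 0$ combined with Theorem \ref{thm:total-mass-measure-quasiprojective-setting} ensures this measure is non-degenerate, and the $\overline L_j$-boundedness of $\overline M$ permits the perturbative use of the inequality on $\overline L_j \pm \epsilon \overline M$; a Calabi-type maximum principle then forces $h \equiv 0$. The main obstacle is precisely this last step: rigorously carrying out both the integration by parts and the Calabi-type uniqueness in the Berkovich non-archimedean Chambert-Loir--Ducros formalism, where the interplay between the compact support of $h$, the positivity and support of the measure $c_1(\overline L_1) \wedge \cdots \wedge c_1(\overline L_{n-1})$, and the $\overline L_j$-boundedness of $\overline M$ must all be controlled in order to pass from the vanishing of $\int dh \wedge d^c h \wedge (\cdot)$ to the constancy of $h$.
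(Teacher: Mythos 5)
Your reduction to $\C_v$, your limit argument for the inequality via Theorem \ref{thm:local-intersection-number}, and your derivation of the vanishing of $\nu := c_1(\overline M)\wedge c_1(\overline L_1)\wedge\cdots\wedge c_1(\overline L_{n-1})$ by the quadratic-in-$t$ / Cauchy--Schwarz trick and the density Theorem \ref{thm:density-model-functions-with-compact-support} are all correct and faithful in spirit to the paper. The problem is the final step, and you have identified it yourself: you reduce to ``$dd^c h\wedge T = 0 \Rightarrow h$ constant'' via $\int dh\wedge d^c h\wedge T = 0$ and appeal to ``a Calabi-type maximum principle,'' but this is precisely the entire content of the equality case, and it does not follow formally from $L_i^n > 0$. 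The current $T=c_1(\overline L_1)\wedge\cdots\wedge c_1(\overline L_{n-1})$ can degenerate on large sets; moreover, the integration by parts $\int h\,dd^c h\wedge T=-\int dh\wedge d^c h\wedge T$ requires regularity of $h$ (an energy estimate in the Chambert-Loir--Ducros formalism) that a merely continuous Green function of a limit line bundle is not known to satisfy. This is not a technicality to be waved through; it is an open direction.

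The paper sidesteps the gradient/energy argument entirely and instead runs an induction on dimension, following the Yuan--Zhang strategy. One first establishes the stronger vanishing $c_1(\overline M)^2\,c_1(\overline L_1)\cdots c_1(\overline L_{n-2})=0$ (Lemma \ref{lemme:vanishing-of-measure}), which needs the extra $\overline L_{n-1}\pm\epsilon\overline M$ perturbation and a second Cauchy--Schwarz. Then a volume estimate (Lemma \ref{lemme:diviseur-plus-grand-que-ample}) produces, for a generic codimension-one $Y\subset U$, a section of a large multiple of $L_{n-1,k}$ vanishing to uniformly positive order along $Y$; feeding this into the integration-by-parts formula (Theorem \ref{thm:integration-by-part}) yields $\overline M_{|Y}^2\cdot\overline{L_1}_{|Y}\cdots\overline{L_{n-2}}_{|Y}=0$ (Proposition \ref{prop:induction}). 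Combined with a verification that $L_{i|Y}^{n-1}>0$ for $Y$ outside a fixed divisor $V$, induction gives that $g_{\overline M}$ is constant on $Y^{\an}$, and Bertini plus density of closed points concludes. The base case (curves) is handled separately using the flat metric on $\OO_X(p-q)$. So the correct route is not to attack the degenerate Monge--Ampère operator head-on but to slice down to curves, which is exactly what your sketch is missing.
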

It suffices to prove the result when $\K_v = \C_v$ is complete and algebraically closed by definition of the
intersection number.
  The inequality follows from the arithmetic Hodge index theorem over projective
  varieties (see \cite{yuanArithmeticHodgeIndex2017}) and a limit argument using Theorem
  \ref{thm:local-intersection-number}.
  In particular, we have the following Cauchy-Schwarz inequality: For any compactly supported line bundles
  $\overline M, \overline M '$ and nef adelic line bundles $\overline L_1, \dots, \overline L_{n-1}$, one has
  \begin{equation}
    \left( \overline M \cdot \overline M ' \cdot \overline L_1 \cdots \overline L_{n-1} \right)^2 \leq \left( \overline
    M^2 \cdot \overline L_1 \cdots \overline L_{n-1} \right) \left( \overline M'^2 \cdot \overline L_1 \cdots \overline
    L_{n-1} \right).
    \label{eq:cauchy-schwarz-inequality}
  \end{equation}
  Indeed, this follows from the fact that for any $\lambda \in \R, (\overline M + \overline M')^2 \cdot \overline L_1
  \cdots \overline L_{n-1} \leq 0$.
  We prove the equality case by induction on the dimension of $U$.

  \subsection{Curve case}\label{subsec:curve-case}
  If $U$ is a quasiprojective curve, then
  the $L_{i}$'s do not appear. Let $X$ be the unique projective model of $U$. For any model vertical line bundle
  $\overline N$ over $X$ with metric with compact support in $U^{\an}$ we have
  \begin{equation}
    \overline M \cdot \overline N = 0.
    \label{eq:<+label+>}
  \end{equation}
  By Theorem \ref{thm:local-intersection-number} and Theorem \ref{thm:integration-by-part} we have 
  \begin{equation}
    0 = \overline M \cdot \overline N = \int_{U^{\an}} g_{\overline N} c_1 (\overline M).
    \label{eq:<+label+>}
  \end{equation}
  Since this holds for any model functions $g_{\overline N}$ over $X^{\an}$ with compact support in $U^{\an}$, we get
  the vanishing of the measure $c_1 (\overline M) = 0$ over $U^{\an}$ by Theorem
  \ref{thm:density-model-functions-with-compact-support}.

  Now, let $p,q \in U(\C_v)$ be closed points and let $N = \OO_{X}(p - q)$. There exists a unique
  metrisation $\overline N \in \hat \Pic (X )$ of $N$ such that the metric is flat (see Definition
    5.10 and Theorem 5.11 of \cite{yuanArithmeticHodgeIndex2017}). In particular, $c_1 (\overline N) = 0$ and we have by
    Theorem
  \ref{thm:local-intersection-number}
  \begin{equation}
    \overline M \cdot \overline N = \int_{U^{\an}}g_{\overline M} c_1 (\overline N) = 0.
    \label{eq:<+label+>}
  \end{equation}
  And by Theorem \ref{thm:integration-by-part}, we get
  \begin{equation}
    0 = \overline M \cdot \overline N = g_{\overline M} (p) - g_{\overline M} (q) + \int_{U^{\an}} g_{\overline
    N} c_1 (\overline M) = g_{\overline M} (p) - g_{\overline M} (q).
    \label{eq:<+label+>}
  \end{equation}
  Thus, $g_{\overline M}$ is constant over $U(\C_v) \subset U^{\an}$ which is a dense subset, thus
  $g_{\overline M}$ is constant.

  \subsection{General case}\label{subsec:general-case}
  Suppose $U$ is of dimension $n \geq 2$.
  Let $\overline M_j$ be an approximating sequence of $\overline M$. We
suppose that for every $j \geq 1$ and every $k = 1, \dots, n-1$, the model metrised line bundles $\overline M_j,
\overline L_{k,j}$ are defined over the same model $\sX_j$ which is always possible. Let $\overline
\sD_0$ be a boundary divisor, we can also suppose that $\overline \sD_0$ is defined over every $\sX_j$.
\begin{lemme}\label{lemme:vanishing-of-measure}
  The measure $c_{1}(\overline M)^{2} c_{1}(\overline L_{1}) \cdots c_{1}(\overline L_{n-2})$ is the zero measure.
\end{lemme}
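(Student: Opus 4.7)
The plan is to apply the Cauchy--Schwarz inequality \eqref{eq:cauchy-schwarz-inequality} twice, together with the $\overline L_{n-1}$-boundedness of $\overline M$, and to finish by the density result Theorem~\ref{thm:density-model-functions-with-compact-support}.

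First, since $\overline M^2 \cdot \overline L_1 \cdots \overline L_{n-1} = 0$ by assumption, the Cauchy--Schwarz inequality \eqref{eq:cauchy-schwarz-inequality} forces
\begin{equation*}
  \overline M \cdot \overline N \cdot \overline L_1 \cdots \overline L_{n-1} = 0
\end{equation*}
for every strongly compactly supported vertical line bundle $\overline N$. By Theorem~\ref{thm:local-intersection-number} this intersection equals $\int_{U^{\an}} g_{\overline N}\, c_1(\overline M) c_1(\overline L_1) \cdots c_1(\overline L_{n-1})$. Because $g_{\overline M}$ is constant outside a compact subset $\Omega \Subset U^{\an}$, the signed measure $\nu := c_1(\overline M) c_1(\overline L_1) \cdots c_1(\overline L_{n-1})$ has support in $\Omega$. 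The Green functions $g_{\overline N}$ of strongly compactly supported vertical model divisors, together with constants, have restrictions to $\Omega$ that are dense in $\mathcal{C}^0(\Omega, \mathbf{R})$ by Theorem~\ref{thm:density-model-functions-with-compact-support}. Therefore $\nu = 0$ as a signed measure on $U^{\an}$.

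Next I would perturb $\overline L_{n-1}$ to $\overline L_{n-1} \pm \epsilon \overline M$; for $\epsilon > 0$ small enough both are nef by the $\overline L_{n-1}$-boundedness of $\overline M$. Applying the inequality part of Theorem~\ref{thm:arithm-hodge-index-local} to these perturbed tuples and expanding via the hypothesis $\overline M^2 \cdot \overline L_1 \cdots \overline L_{n-1} = 0$, I obtain
\begin{equation*}
  \pm\, \epsilon \, \overline M^3 \cdot \overline L_1 \cdots \overline L_{n-2} \leq 0,
\end{equation*}
hence $\overline M^3 \cdot \overline L_1 \cdots \overline L_{n-2} = 0$, and consequently $\overline M^2 \cdot \overline L_1 \cdots \overline L_{n-2} \cdot (\overline L_{n-1} + \epsilon \overline M) = 0$. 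A second application of Cauchy--Schwarz with the perturbed nef tuple $\overline L_1, \dots, \overline L_{n-2}, \overline L_{n-1} + \epsilon \overline M$ then yields
\begin{equation*}
  \overline M \cdot \overline N \cdot \overline L_1 \cdots \overline L_{n-2} \cdot (\overline L_{n-1} + \epsilon \overline M) = 0
\end{equation*}
for every strongly compactly supported vertical $\overline N$. Expanding the last factor and cancelling the term $\overline M \cdot \overline N \cdot \overline L_1 \cdots \overline L_{n-1}$, which already vanishes by the first step, leaves
\begin{equation*}
  \overline M^2 \cdot \overline N \cdot \overline L_1 \cdots \overline L_{n-2} = \int_{U^{\an}} g_{\overline N}\, c_1(\overline M)^2 c_1(\overline L_1) \cdots c_1(\overline L_{n-2}) = 0
\end{equation*}
for every such $\overline N$. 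Another appeal to Theorem~\ref{thm:density-model-functions-with-compact-support}, applied on the compact support of the signed measure $c_1(\overline M)^2 c_1(\overline L_1) \cdots c_1(\overline L_{n-2})$, gives the desired vanishing.

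The main technical hurdle is the intermediate identity $\overline M^3 \cdot \overline L_1 \cdots \overline L_{n-2} = 0$: one must apply the inequality of Theorem~\ref{thm:arithm-hodge-index-local} to \emph{both} perturbations $\overline L_{n-1} \pm \epsilon \overline M$ in order to trap the cubic expression between $\leq 0$ and $\geq 0$. The other delicate point is that both density steps rely on the compactness of the support of $c_1(\overline M)$, which itself comes from the strongly compactly supported hypothesis and allows one to test the candidate measures only against Green functions of model vertical divisors with compact support plus constants, precisely the class of test functions furnished by Theorem~\ref{thm:density-model-functions-with-compact-support}.
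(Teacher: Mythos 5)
Your argument is correct and follows the same strategy as the paper's: apply the Cauchy--Schwarz inequality twice, perturb $\overline L_{n-1}$ by $\pm\epsilon\overline M$ using $\overline L_{n-1}$-boundedness, deduce $\overline M^2 \cdot \overline L_1 \cdots \overline L_{n-2} \cdot \overline L_{\pm\epsilon} = 0$ from the Hodge index inequality applied to both perturbations, and finish with the density theorem. The only cosmetic difference is in the middle step: you reach $\overline M^2 \cdot \overline L_1 \cdots \overline L_{n-2} \cdot \overline L_{\pm\epsilon} = 0$ via the explicit intermediate $\overline M^3 \cdot \overline L_1 \cdots \overline L_{n-2} = 0$, whereas the paper symmetrizes directly by writing $\overline L_{n-1} = \tfrac{1}{2}\overline L_\epsilon + \tfrac{1}{2}\overline L_{-\epsilon}$; these are algebraically the same maneuver.
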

\begin{proof}
  The proof of Lemma 2.5 of \cite{yuanArithmeticHodgeIndex2017} applies in our setting. We show that for any
  compactly supported model vertical metrised line bundle $\overline M'$,
  \begin{equation}
    \label{eq:19}
    \overline M^{2} \overline M' \overline L_{1} \cdots \overline L_{n-2} = 0.
  \end{equation}
  In particular by Theorem \ref{thm:integration-by-part}, we have
  \begin{equation}
    \label{eq:20}
    \int_{U^{\an}} g_{\overline M'} c_{1}(\overline M)^{2} c_{1}(\overline L_{1}) \cdots c_{1}(\overline L_{n-2}) = 0.
  \end{equation}
  And by Theorem \ref{thm:density-model-functions-with-compact-support}, we have the vanishing of the measure.

  Thus, it remains to show
  \eqref{eq:19}. By the Cauchy-Schwarz inequality, we have
  \begin{equation}
    \overline M \cdot \overline M ' \cdot \overline L_1 \cdots \overline L_{n-1} = 0.
    \label{eq:<+label+>}
  \end{equation}
  Thus, \eqref{eq:19} is equivalent to
  \begin{equation}
    \overline M \cdot \overline M' \cdot \overline L_1 \cdots \overline L_{n-2} \cdot (\overline L_{n-1} \pm \epsilon
    \overline M) =0
    \label{eq:inequality-to-show}
  \end{equation}
  for $\epsilon > 0$ small enough. Now, for $\epsilon > 0$ small enough $\overline L_{n-1} \pm \epsilon \overline M$
  is nef. Write $\overline L_{\pm \epsilon} = \overline L_{n-1} \pm \epsilon \overline M$. We apply the Cauchy-Schwarz
  inequality again

  \begin{equation}
    \left( \overline M \cdot \overline M ' \cdot \overline L_1 \cdots \overline L_{n-2} \overline L_{\pm \epsilon}
    \right)^2 \leq \left( \overline M^2 \overline L_1 \cdots \overline L_{n-2} \cdot \overline L_{\pm
    \epsilon} \right) \left( \overline M'^2 \cdot \overline L_1 \cdots \overline L_{n-2} \cdot \overline L_{\pm \epsilon}
    \right).
    \label{eq:<+label+>}
  \end{equation}
  Therefore, it suffices to show that $\overline M^2 \overline L_1 \cdots \overline L_{\pm \epsilon} = 0$. We have
  \begin{align}
    0 = \overline M^2 \cdot \overline L_1 \cdots \overline L_{n-1} &= \overline M^2 \cdot \overline L_1 \cdots \overline
    L_{n-2} \left( \frac{1}{2} \overline L_{n-1} + \frac{\epsilon}{2} \overline M + \frac{1}{2} \overline L_{n-1} -
      \frac{\epsilon}{2} \overline M \right) \\
    & = \frac{1}{2} \overline M^2 \cdot \overline L_1 \cdots \overline L_{\epsilon} + \frac{1}{2} \overline M^2 \cdot
    \overline L_1  \cdots \overline L_{- \epsilon} \leq 0
    \label{eq:<+label+>}
  \end{align}
  where the last inequality comes from the inequality of the arithmetic Hodge index theorem. Thus, we get $\overline M^2
  \overline L_1 \cdots \overline L_{\pm \epsilon} = 0$.
\end{proof}

  \begin{lemme}\label{lemme:diviseur-plus-grand-que-ample}
  Let $\overline L \in \hat \Pic (U )$ be a nef metrised line bundle such that $L^{n} > 0$. Write $\overline L =
  \OO_U (\overline D)$ and let $\overline D_{k}$ be a Cauchy sequence defining $\overline
  D$. For every codimension 1 closed subvariety $Y \subset U \setminus \Supp D_{|U}$, there exists $k_0 \geq 0$
  and a rational number $c > 0$ such that for every $k \geq k_0$, there exists an integer $n_k$ such that $n_k L_{k}$
  admits a global section $s_k$ satisfying $\frac{1}{n_k} \ord_Y (s_k) \geq c$.
\end{lemme}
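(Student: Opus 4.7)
The plan is to produce the desired sections by showing that for a fixed rational $c > 0$ and for all $k \geq k_0$, the $\Q$-line bundle $L_k - c\,\overline{Y}_k$ on $X_k$ is big, where $\overline{Y}_k$ denotes the Zariski closure of $Y$ in the model $X_k$. By the hypothesis $Y \not\subset \Supp D_{|U}$, the divisor $\overline{Y}_k$ is effective and not contained in $X_k \setminus U$. Once bigness is established, choosing $n_k$ with $c n_k \in \Z$ and sufficiently divisible to realize the positive volume, we obtain a nonzero global section of $n_k(L_k - c\,\overline{Y}_k)$, which we regard as a section $s_k \in H^0(X_k, n_k L_k)$ satisfying $\ord_{\overline{Y}_k}(s_k) \geq c n_k$, and hence $\frac{1}{n_k}\ord_Y(s_k) \geq c$.

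To obtain bigness uniformly in $k$, I apply the volume inequality of Theorem \ref{thm:inequality-volume} (after a blowup of $X_k$ if necessary to ensure $\overline{Y}_k$ is $\Q$-Cartier, which does not affect the generic fiber):
\begin{equation*}
\vol(L_k - c\,\overline{Y}_k) \geq L_k^n - n c \cdot L_k^{n-1} \cdot \overline{Y}_k.
\end{equation*}
By definition of the arithmetic intersection and Theorem \ref{thm:total-mass-measure-quasiprojective-setting}, $L_k^n \to L^n > 0$, so $L_k^n \geq L^n/2$ for $k$ sufficiently large. If we also establish a uniform upper bound $L_k^{n-1} \cdot \overline{Y}_k \leq C$ for $k \geq k_0$, then taking $c \in \Q$ with $0 < c < L^n/(4nC)$ gives $\vol(L_k - c\,\overline{Y}_k) \geq L^n/4 > 0$, and any sufficiently divisible $n_k$ produces the required section.

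The main obstacle is this uniform upper bound on $L_k^{n-1} \cdot \overline{Y}_k$. My approach is to fix $k_0$ and refine the models so that every $X_k$ with $k \geq k_0$ admits a birational morphism $\pi_k : X_k \to X_{k_0}$. The Cauchy condition defining $\overline{D}$ translates to an inequality of $\Q$-divisors on $X_k$ of the form
\begin{equation*}
-\epsilon_{k_0}\,\pi_k^* D_0 \leq L_k - \pi_k^* L_{k_0} \leq \epsilon_{k_0}\,\pi_k^* D_0,
\end{equation*}
where $D_0$ denotes the restriction of the chosen boundary divisor $\overline{\sD}_0$ to the generic fiber $X_{k_0}$. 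Writing $L_k = \pi_k^* L_{k_0} + E_k$ with $E_k$ supported on $X_k \setminus U$ and sandwiched by $\pm \epsilon_{k_0}\,\pi_k^* D_0$, expanding $(\pi_k^* L_{k_0} + E_k)^{n-1} \cdot \overline{Y}_k$, and applying the projection formula together with $(\pi_k)_* \overline{Y}_k = \overline{Y}_{k_0}$, each term reduces to a fixed intersection number on $X_{k_0}$ among $L_{k_0}$, $D_0$ and $\overline{Y}_{k_0}$; since $\epsilon_{k_0}$ is a fixed positive constant independent of $k$, this yields the desired uniform bound $C$ and completes the proof.
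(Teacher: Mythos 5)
Your overall strategy—apply the volume inequality of Theorem \ref{thm:inequality-volume} to conclude that sufficiently many sections of $n_kL_k$ vanish along the divisor you subtracted—is the same as the paper's. The key difference is \emph{what} you subtract, and this is where a genuine gap appears. You subtract $c\,\overline{Y}_k$, the closure of $Y$ in the varying model $X_k$, and your volume estimate then requires a uniform upper bound $L_k^{n-1}\cdot\overline{Y}_k\leq C$. The argument you sketch for this bound does not work: the Cauchy condition gives $-\epsilon_{k_0}\pi_k^*D_0\leq E_k\leq\epsilon_{k_0}\pi_k^*D_0$ as an inequality of divisors, but intersection numbers are not monotone in such inequalities without appropriate nefness. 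When you expand $(\pi_k^*L_{k_0}+E_k)^{n-1}\cdot\overline{Y}_k$, the mixed terms $E_k^{\,j}\cdot(\pi_k^*L_{k_0})^{n-1-j}\cdot\overline{Y}_k$ for $j\geq 1$ are supported on $\overline{Y}_k\cap\Supp E_k$, which lies over the boundary and changes with $k$; neither the projection formula nor the sandwich inequality turns these into fixed intersection numbers on $X_{k_0}$, because $E_k$ is not a pullback from $X_{k_0}$ and $L_{k_0}+\epsilon_{k_0}D_0$ is not nef. So the claimed bound $C$ is not established.

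The paper circumvents this entirely: it fixes an ample effective divisor $H$ on a single compactification $X$ with $Y\subset\Supp H$, and subtracts $sH$ (rather than $c\,\overline{Y}_k$). Since $H$ lives on a fixed model, the numbers $L_k^{n-1}\cdot H$ are exactly the Cauchy sequence defining the geometric intersection number $L^{n-1}\cdot H$ in the adelic framework, so they converge for free. Choosing $s$ with $L^n - s\,n\,L^{n-1}\cdot H>0$, the volume inequality gives $\vol(L_k-sH)>0$ for $k\gg 0$, producing a section of $n_k(L_k-sH)$; viewed as a section of $n_kL_k$ it vanishes to order $\geq s\,n_k\ord_Y(H)$ along $Y$, yielding $c=s\,\ord_Y(H)$. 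To repair your proof you would either need to establish the missing uniform bound on $L_k^{n-1}\cdot\overline{Y}_k$—which is not straightforward—or switch to a fixed ample $H$ dominating $Y$ as the paper does.
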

\begin{proof}
  Let $X$ be a completion of $U$ and $D_0$ be a boundary divisor on $X$ and let $H$ be an ample effective divisor over
  $X$ such that $Y \subset \Supp H$. We can suppose that every $X_i$ where $D_i$ is defined admits a birational morphism
  $X_i \rightarrow X$. Because $L$ is nef, we have $L^{n-1} \cdot H \geq 0$ and therefore there exists a rational number
  $s > 0$ such that $L^n - s L^{n-1} \cdot H > 0.$ Since $L_k^n \rightarrow L^n$ and $L^{n-1}_k \cdot H \rightarrow
  L^{n-1} \cdot H$, there exists an integer $k_0 \geq 0$ such that for every $k \geq k_0$, $L_k^n - s L_k^{n-1} \cdot H
  > 0$. In particular, this implies by Theorem \ref{thm:inequality-volume}
  \begin{equation}
    \forall k \geq k_0, \quad \vol (X_k, L_k - sH) \geq L_k^{n} - s L_k^{n-1} \cdot H >0.
    \label{eq:<+label+>}
  \end{equation}
  Therefore, there exists an integer $n_k$ such that $\Gamma (X_k, n_k (D_k - sH)) \neq 0$. Since $Y \not \subset \supp D_{|U}$, this
  yields a global section $s_k$ of $n_k L_k$ such that
  \begin{equation}
    \frac{1}{n_k} \ord_Y (s_k) \geq s \ord_Y (H) =: c.
    \label{eq:<+label+>}
  \end{equation}
\end{proof}

Now, we write $\overline L_{n-1} = \OO_U (\overline D_{n-1})$ and we replace $U$ by $U \setminus \supp
{D_{n-1}}_{|U}$. If $g_{\overline M}$ is constant over $(U \setminus \supp {D_{n-1}}_{|U})^{\an}$ then it will be
constant over $U^{\an}$ by density.

\begin{prop}\label{prop:induction}
  For any codimension 1 closed subvariety $Y \subset U$, we have
  \begin{equation}
    \overline M_{|Y}^2 \cdot \overline{L_1}_{|Y} \cdots \overline{L_{n-2}}_{|Y} = 0.
    \label{eq:<+label+>}
  \end{equation}
\end{prop}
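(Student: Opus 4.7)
The plan is to apply the integration by parts formula (Theorem~\ref{thm:integration-by-part}) to an effective representative of $\overline L_{n-1}$ whose divisor on $U$ contains $Y$ with a fixed multiplicity, and to exploit the vanishing of the measure $c_1(\overline M)^2 c_1(\overline L_1) \cdots c_1(\overline L_{n-2})$ from Lemma~\ref{lemme:vanishing-of-measure} to kill the integral term.

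First, I apply Lemma~\ref{lemme:diviseur-plus-grand-que-ample} to $\overline L_{n-1}$ and to $Y \subset U \setminus \supp {D_{n-1}}_{|U}$. This yields, for every sufficiently large $k$, a section $s_k \in H^0(X_k, n_k L_{n-1,k})$ with $\frac{1}{n_k}\ord_Y(s_k) \geq c$ for a fixed rational $c>0$. The arithmetic divisor $\overline D_k := \frac{1}{n_k}\hat\div(s_k)_{|U}$ represents $\overline L_{n-1,k}|_U$ in $\hat\Pic(U / \OO_v)$, and its divisor on $U$ decomposes as $D_{k,U} = cY + E_k$ with $E_k$ an effective $\Q$-divisor on $U$ not containing $Y$. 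I then apply Theorem~\ref{thm:integration-by-part} with $\overline M$ in the strongly-compactly-supported slot, $\overline M, \overline L_1, \dots, \overline L_{n-2}$ as the integrable line bundles, and $\overline D_k$ as the divisor. The boundary contribution $\lambda\, c_1(M) c_1(L_1) \cdots c_1(L_{n-2}) \cdot D_{k,\infty}$ vanishes because $M$ is trivial, and the integral $\int g_{\overline D_k}\, c_1(\overline M)^2 c_1(\overline L_1) \cdots c_1(\overline L_{n-2})$ vanishes by Lemma~\ref{lemme:vanishing-of-measure}. What remains is
\[
\overline M^2 \cdot \overline L_1 \cdots \overline L_{n-2} \cdot \overline L_{n-1,k} \;=\; c \cdot \overline M_{|Y}^2 \cdot \overline {L_1}_{|Y} \cdots \overline {L_{n-2}}_{|Y} \;+\; \bigl(\overline M^2 \cdot \overline L_1 \cdots \overline L_{n-2}\bigr)_{|E_k}.
\]

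Taking $k \to \infty$, the left hand side tends to $\overline M^2 \cdot \overline L_1 \cdots \overline L_{n-1} = 0$ by continuity of the intersection number in the last variable (the Cauchy sequence $\overline L_{n-1,k}$ consists of nef model line bundles converging to $\overline L_{n-1}$). On the right hand side, both summands are $\leq 0$ by the already-proven inequality of Theorem~\ref{thm:arithm-hodge-index-local}: the first applied to the $(n-1)$-dimensional quasiprojective variety $Y$, and the second applied to each component of $E_k$; restrictions of strongly compactly supported vertical bundles remain strongly compactly supported, and restrictions of nef line bundles remain nef. Since the first summand does not depend on $k$, a fixed non-positive number added to a non-positive sequence can only converge to $0$ if the fixed summand is itself $0$. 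Dividing by $c > 0$ gives $\overline M_{|Y}^2 \cdot \overline {L_1}_{|Y} \cdots \overline {L_{n-2}}_{|Y} = 0$.

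The main obstacle is a bookkeeping one rather than a substantive difficulty. I must make sure Theorem~\ref{thm:integration-by-part} applies cleanly when one of the integrable line bundle slots is itself occupied by $\overline M$ (playing the role of an integrable vertical bundle, not the distinguished strongly-compactly-supported one), and that the convergence $\overline L_{n-1,k} \to \overline L_{n-1}$ in the boundary topology transfers to convergence of the arithmetic intersection number in its last argument. Both follow from multilinearity and continuity of the intersection product together with the weak convergence of Chern currents for nef Cauchy sequences, but each step has to be unfolded carefully.
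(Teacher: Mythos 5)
Your proposal is correct and follows essentially the same route as the paper's own proof: the same appeal to Lemma~\ref{lemme:diviseur-plus-grand-que-ample} to produce the decomposition $\frac{1}{n_k}\div(s_k) = cY_k + D_k$, the same application of Theorem~\ref{thm:integration-by-part}, the same use of Lemma~\ref{lemme:vanishing-of-measure} to kill the integral term, and the same limit argument pinning the fixed non-positive summand $c\left(\overline M^2 \cdot \overline L_1 \cdots \overline L_{n-2}\right)_{|Y}$ to zero. The only cosmetic difference is notation ($E_k$ vs.\ $D_k$) and a slightly more explicit spelling-out of the closing limit argument.
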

\begin{proof}
Let $Y \subset U$ be a codimension 1 subvariety. Recall that we have assumed that for every $1 \leq k \leq n-1$ and for
every $i \geq 0$, $L_{k,i}$ and $M_i$ are defined over the same model $X_i$. By Lemma \ref{lemme:diviseur-plus-grand-que-ample},
there exists $k_0 \geq 0$ and a number $c >0$ such that for every $k \geq k_0$, $n_k L_{n-1, k}$ admits a global section
$s_k$ over $X_k$ such that
\begin{equation}
  \frac{1}{n_k} \ord_{Y_k} (s_k) \geq c
  \label{eq:<+label+>}
\end{equation}
for some integer $n_k \geq 1$ where $Y_k$ is the closure of $Y$ in $X_k$.
Therefore, over $X_k$ we have
\begin{equation}
  \frac{1}{n_k} \div (s_k) = cY_k + D_k
  \label{eq:<+label+>}
\end{equation}
where $D_k$ is some effective divisor. Now we have
\begin{equation}
  \overline M^2 \cdot \overline L_1 \cdots \overline L_{n-1} = \lim_k \overline M^2 \cdot \overline L_1 \cdots \overline
  L_{n-2} \cdot \overline L_{n-1,k}.
  \label{eq:<+label+>}
\end{equation}
Indeed, by Theorem \ref{thm:local-intersection-number}, we have $\overline M^2 \cdot \overline L_1 \cdots \overline
L_{n-1} = \int_\Omega g_{\overline M} c_1(\overline M) \cdot c_1 (\overline L_1) \cdots c_1 (\overline L_{n-1})$ and
$c_1 (\overline L_{n-1})$ is the limit of $c_1 (\overline L_{n-1,k})$ over the compact subset $\Omega$.
Using that $\overline L_{n-1,k} = \OO_{X_k} \left(\frac{1}{n_k} \hat \div(s_k) \right)$ we have by Theorem
\ref{thm:integration-by-part}
\begin{align}
  \begin{split}
  \overline M^2 \cdot \overline L_1 \cdots \overline L_{n-2} \cdot \overline L_{n-1,k} &= c \left( \overline M^2 \cdot
  \overline L_1 \cdots \overline L_{n-2}  \right)_{|Y} + \left( \overline M^2 \cdot
  \overline L_1 \cdots \overline L_{n-2}  \right)_{| {D_k}_{|U}} \\
  & \quad \quad - \int_{U^{\an}} \log \left| s_k \right| c_1 (\overline M^2) c_1 (\overline L_1) \cdots c_1 (\overline L_{n-2}).
  \label{eq:<+label+>}
\end{split}
\end{align}
Indeed, since $M$ is a limit of trivial line bundles $M_i$ the geometric intersection number in the integration by parts formula vanishes.
The integral vanishes by Lemma \ref{lemme:vanishing-of-measure} and by the inequality of the arithmetic Hodge index theorem
\begin{equation}
  \left( \overline M^2 \cdot \overline L_1 \cdots \overline L_{n-2}  \right)_{| {D_k}_{|U}} \leq 0.
  \label{eq:<+label+>}
\end{equation}
Therefore letting $k \rightarrow + \infty$ we get
\begin{equation}
  0 = \overline M^2 \cdot \overline L_1 \cdots \overline L_{n-1} \leq c \left( \overline M^2 \cdot
  \overline L_1 \cdots \overline L_{n-2}  \right)_{|Y} \leq 0.
  \label{eq:<+label+>}
\end{equation}
and the proposition is shown.
  \end{proof}
    \begin{proof}[Proof of Theorem \ref{thm:arithm-hodge-index-local}]
      To conclude the proof, let $Y$ be a codimension 1 closed subvariety of $U$. We have by Proposition
      \ref{prop:induction} that
      \begin{equation}
        \left( \overline M_{|Y}^2 \cdot \overline{L_1}_{|Y} \cdots \overline{L_{n-2}}_{|Y} = 0 \right).
        \label{eq:<+label+>}
      \end{equation}
      We claim that apart from a finite number of codimension 1 subvarieties of $U$, we have ${L_i}_{|Y}^{n-1} > 0$.
      Indeed, write $\overline L_i = \OO_U (\overline D_i)$ and let $X$ be a completion where $D_0$ is defined and
      $H$ be an ample effective divisor over $X$. We assume that for every $k$, $D_{i,k}$ is defined over the same model
      $X_k$ which admits a birational morphism $X_k \rightarrow X$. We can also assume
      that the $\epsilon_k$'s defining the Cauchy sequences for every $\overline D_i$ are the same. We
      argue as in the proof of Lemma \ref{lemme:diviseur-plus-grand-que-ample} to show that there exists $k_0$ and $s >
      0$ such that for every $i = 1, \dots, n -2$ and for every $k \geq k_0, \vol (D_{i, k} - s H) > 0$.  Let $k_0$ be
      large enough such that for every $k \geq k_0$, we have $s H - \epsilon_k D_0$ is ample over $X$. Now since $\vol (D_{i,k_0}
      - s H) > 0$, there exists an integer $M_i$ and a rational function $\phi_i$ over $X_{k_0}$ such that
      \begin{equation}
        D_{i,k_0} - s H + \frac{1}{M_i} \div_{X_{k_0}} (\phi_i) \geq 0.
        \label{eq:<+label+>}
      \end{equation}
      Now, for every $k \geq k_0$, we have over $X_k$
      \begin{equation}
        D_{i,k} - s H + \frac{1}{M_i} \div_{X_k} (\phi_i) \geq - \epsilon_{k_0} D_0.
        \label{eq:<+label+>}
      \end{equation}
      Let $V \subset U$ be the closed subvariety defined by
      \begin{equation}
        V = \left( \bigcup_i \supp {D_i}_{|U} \right) \cup \left( \bigcup_i \supp {\div (\phi_i)}_{|U} \right) \cup
        \Supp H_{|U}.
        \label{eq:<+label+>}
      \end{equation}
      Let $Y \subset U$ be a codimension 1 subvariety such that $Y \not \subset V$ and write
      $\overline Y_k$ for the closure of $Y$ in $X_k$. We also write $\overline Y$ for the closure of $Y$ in $X$. Then,
      we also have
      \begin{equation}
        {D_{i,k}}_{|\overline Y_k} + \frac{1}{M_i} \div (\phi_{i |\overline Y_k}) \geq s H_{|\overline Y_k} -
        \epsilon_{k_0} {D_0}_{|\overline Y_k}.
        \label{eq:<+label+>}
      \end{equation}
      In particular, we get for every $k \geq k_0$ (because $L_{i,k}$ and $sH - \epsilon_{k_0}D_0$ are nef over
      $X_k$)
      \begin{equation}
        {L_{i}}_{|Y}^{n-1} = \lim_k {L_{i,k}}_{|\overline Y_k}^{n-1} \geq \lim_k (s H_{|\overline Y_k} -
          \epsilon_{k_0} {D_0}_{|\overline Y_k})^{n-1}.
        \label{eq:<+label+>}
      \end{equation}
      And we can conclude because $sH - \epsilon_{k_0} D_0$ is ample over $X$ and
      \begin{equation}
        (s H_{|\overline Y_k} - \epsilon_{k_0} {D_0}_{|\overline Y_k})^{n-1} = (s H - \epsilon_{k_0} D_0)^{n-1} \cdot
        \overline Y_k = (s H - \epsilon_{k_0} D_0)^{n-1} \cdot \overline Y > 0
        \label{eq:<+label+>}
      \end{equation}
      where $\overline Y$ is the closure of $Y$ in $X$.
      By induction ${g_{\overline M}}_{|Y}$ is constant. By Bertini's theorem, if $x,y$ are two closed point of
      $U \setminus V$ then there exists an irreducible hyperplane section $Y$ of $X$ that contains $x,y$. We conclude that
      $g_{\overline M}(x) = g_{\overline M} (y)$. By
      density of the closed point in the Berkovich analytification we get that $g_{\overline M}$ is constant over
      $(U \setminus V)^{\an}$ and thus over $U^{\an}$.
  \end{proof}

  We can now prove Theorem \ref{bigthm:calabi}.
\begin{cor}[Calabi theorem]\label{CorCalabiThm}
  Let $\overline L_{1}, \overline L_{2}$ be two nef metrised line bundles over a
  quasiprojective variety $U$ over $\C_v$ such that $c_1 (\overline L_1)^n = c_1 (\overline L_2)^n$ and
  $\overline M = \overline L_{1} - \overline L_{2}$ is a compactly supported vertical line bundle (in particular $L_1 = L_2 =: L$). If $L^n >0$, then $g_{\overline M}$ is constant.
\end{cor}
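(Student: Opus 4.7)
The strategy is to apply the equality case of Theorem \ref{thm:arithm-hodge-index-local} to the strongly compactly supported vertical line bundle $\overline M := \overline L_{1} - \overline L_{2}$ with auxiliary nef bundle $\overline N := \overline L_{1} + \overline L_{2}$. The bundle $\overline N$ is nef with $N^{n} = 2^{n}L^{n} > 0$, and the decomposition $\overline N \pm \varepsilon \overline M = (1 \pm \varepsilon)\overline L_{1} + (1 \mp \varepsilon)\overline L_{2}$ for $\varepsilon \in (0,1]$ exhibits $\overline N \pm \varepsilon \overline M$ as a sum of nef line bundles, so $\overline M$ is $\overline N$-bounded. It therefore suffices to establish that $\overline M^{2} \cdot \overline N^{n-1} = 0$.

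Expanding by the binomial formula, this reduces to showing that each intersection number $\overline M^{2} \cdot \overline L_{1}^{i} \cdot \overline L_{2}^{n-1-i}$ vanishes for $0 \leq i \leq n-1$. Each such quantity is already non-positive by the inequality part of Theorem \ref{thm:arithm-hodge-index-local}, so it is enough to show that their unweighted sum is zero. This is precisely where the Calabi hypothesis enters. Using the telescoping identity
\begin{equation*}
c_{1}(\overline L_{1})^{n} - c_{1}(\overline L_{2})^{n} \;=\; c_{1}(\overline M) \cdot \sum_{j=0}^{n-1} c_{1}(\overline L_{1})^{j} \cdot c_{1}(\overline L_{2})^{n-1-j}
\end{equation*}
at the level of Chambert-Loir measures and integrating against $g_{\overline M}$, Theorem \ref{thm:local-intersection-number} yields
\begin{equation*}
\sum_{j=0}^{n-1} \overline M^{2} \cdot \overline L_{1}^{j} \cdot \overline L_{2}^{n-1-j} \;=\; \int_{U^{\an}} g_{\overline M} \bigl(c_{1}(\overline L_{1})^{n} - c_{1}(\overline L_{2})^{n}\bigr).
\end{equation*}
The right-hand side vanishes by the hypothesis $c_{1}(\overline L_{1})^{n} = c_{1}(\overline L_{2})^{n}$, and since every term on the left is non-positive, each term must be zero.

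Combining the two steps gives $\overline M^{2} \cdot \overline N^{n-1} = 0$, and the equality case of Theorem \ref{thm:arithm-hodge-index-local} applied with all $\overline L_{i}$ set equal to $\overline N$ then delivers that $g_{\overline M}$ is constant on $U^{\an}$, which is the statement.

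The main obstacle I anticipate is bookkeeping: one must ensure that the $(n+1)$-fold intersection numbers $\overline M \cdot \overline L_{i}^{n}$ and $\overline M^{2} \cdot \overline L_{1}^{j} \cdot \overline L_{2}^{n-1-j}$ are genuinely defined for a strongly compactly supported vertical $\overline M$ paired with products of nef (hence integrable) line bundles, and that the integral interpretation furnished by Theorem \ref{thm:local-intersection-number} is compatible with the telescoping rearrangement performed at the level of Chambert-Loir measures. This compatibility is immediate from the multilinearity of the Chambert-Loir product on integrable line bundles, but it is the only nontrivial point that needs to be checked before the clean telescoping argument goes through.
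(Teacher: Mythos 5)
Your proof is correct and follows essentially the same route as the paper's: you use Theorem \ref{thm:local-intersection-number} and the telescoping factorization of $c_1(\overline L_1)^n - c_1(\overline L_2)^n$ to deduce $\sum_j \overline M^2 \cdot \overline L_1^j \cdot \overline L_2^{n-1-j} = 0$, use non-positivity of each term to kill them individually, conclude $\overline M^2 \cdot (\overline L_1 + \overline L_2)^{n-1} = 0$, and then apply the equality case of Theorem \ref{thm:arithm-hodge-index-local} with auxiliary bundle $\overline L_1 + \overline L_2$. The only cosmetic difference is that you write the telescoping at the level of Chambert-Loir measures before integrating, whereas the paper phrases it directly as a multilinear identity of intersection numbers, and you spell out the (easy) verification that $\overline M$ is $(\overline L_1+\overline L_2)$-bounded, which the paper leaves to the reader.
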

\begin{proof}
  Let $\overline M = \overline L_{1} - \overline L_{2}$ and let $g_{\overline M}$ be its associated Green function. By
  Theorem \ref{thm:local-intersection-number}, we have
  \begin{equation}
    \label{eq:18}
    \overline M \cdot \overline L_{1}^{n} = \int_{X^{\an}} g_{\overline M} c_{1}(\overline L_{1})^{n} = \int_{X^{\an}}
    g_{\overline M} c_{1}(\overline L_{2})^n = \overline M \cdot \overline L_{2}^{n}.
  \end{equation}
  Thus, we get
  \begin{equation}
    \label{eq:21}
    0 = \overline M \cdot \left( \overline L_1^n - \overline L_2^n \right) = \overline M^{2} \cdot \left(\sum_{k=0}^{n-1}
    \overline L_{1}^{k} \overline L_{2}^{n-1-k}\right).
  \end{equation}
  By the inequality of the arithmetic Hodge index theorem every term in that sum is 0. Thus, we have
  \begin{equation}
    \label{eq:22}
    \overline M^{2}\cdot (\overline L_{1} + \overline L_{2})^{n-1} = 0.
  \end{equation}
  Now, it is clear that $\overline M$ is $\overline L_{1} + \overline L_{2}$-bounded and that $(L_1 + L_2)^n >0$, by the
  equality case of the arithmetic Hodge index theorem we have that $g_{\overline M}$ is constant.
\end{proof}

\bibliographystyle{alpha}
\bibliography{biblio}

\end{document}